\tikzset{join/.code=\tikzset{after node path={%
\ifx\tikzchainprevious\pgfutil@empty\else(\tikzchainprevious)%
edge[every join]#1(\tikzchaincurrent)\fi}}}
\tikzset{>=stealth',every on chain/.append style={join},
         every join/.style={->}}
\tikzstyle{labeled}=[execute at begin node=$\scriptstyle,
\newcommand{\showcomments}{no}
\newsavebox{\commentbox}
\theoremstyle{plain}
\newtheorem{theorem}{Theorem}[section]
\newtheorem{lemma}[theorem]{Lemma}
\newtheorem{proposition}[theorem]{Proposition}
\theoremstyle{definition}
\newtheorem{definition}[theorem]{Definition}
\newtheorem{remark}[theorem]{Remark}
\newcommand{\R}{\mathbb{R}}
\newcommand{\A}{\mathcal{A}}
\renewcommand{\emptyset}{\varnothing}
\renewcommand{\setminus}{-}
\newcommand{\field}[1]{\mathbb{#1}}
\renewcommand{\P}{\field{P}}
\begin{document}
\title[Connectedness of Bowditch Boundary of Dehn Fillings]{Connectedness of Bowditch Boundary of Dehn Fillings}

\author{Ashani Dasgupta}
\address{
Cheentan Research\\
Kolkata, WB 700029\\
India}
\email{ashani.dasgupta@cheenta.com}

\date{\today}

\begin{abstract}
We study Dehn fillings of relatively hyperbolic group pairs $(\Gamma, \P)$ and the persistence of connectedness of Bowditch boundary in sufficiently long Dehn fillings. We show that the restriction of peripheral subgroups to virtually polycyclic subgroups (as in \cite{DehnFillings}) is not needed.
\end{abstract}

\maketitle

\section{Introduction}

In \cite{GrovesManning08_DehnRelHyp} and \cite{Osin07_Peri}, Groves, Manning and Osin study group-theoretic Dehn fillings. In this paper we look into Dehn fillings of relatively hyperbolic group pairs in the spirit of \cite{GrovesManning08_DehnRelHyp}.

Our main result concerns the persistence of properties of relative oneendedness, nonsplitting over parabolic subgroups and connectedness properties of Bowditch boundary. These questions are studied with restriction on peripheral subgroups by Groves and Manning in \cite{DehnFillings}.

\begin{theorem}\cite[Theorem 1.6]{DehnFillings}
Let $G$ be a group which is hyperbolic relative to a finite collection $\P$ of subgroups, and suppose that all small subgroups of $G$ are finitely generated. Furthermore, suppose that $G$ admits no nontrivial elementary splittings. Then all sufficiently long $\mathcal{M}$-finite co-slender fillings $(G, \P) \rightarrow(\bar{G}, \bar{\P})$ have the property that $\bar{G}$ admits no nontrivial elementary splittings.
\end{theorem}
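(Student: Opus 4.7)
The plan is to argue by contradiction following a Bestvina--Paulin style limiting strategy, with the roles previously played by virtual polycyclicity of peripherals replaced by the co-slender and $\mathcal{M}$-finite hypotheses. Suppose the conclusion fails. Then there is a sequence of progressively longer co-slender $\mathcal{M}$-finite fillings $\eta_n \colon (G,\P) \twoheadrightarrow (\bar G_n, \bar \P_n)$ whose kernels exhaust larger and larger balls in $G$, yet each quotient $\bar G_n$ admits a nontrivial elementary splitting, i.e.\ acts minimally and without global fixed point on a simplicial tree $T_n$ with elementary edge stabilizers.

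First I would pull back each $T_n$ through $\eta_n$ to obtain a nontrivial $G$--action on $T_n$. The main point is that for long enough fillings, the preimage in $G$ of any elementary subgroup of $\bar G_n$ (finite, two-ended, or parabolic) is itself elementary in $(G,\P)$; the co-slender hypothesis is what controls the parabolic case without appealing to polycyclicity, since it guarantees that peripheral subgroups of $\bar G_n$ lift to subgroups commensurable with conjugates of members of $\P$. Fixing a finite generating set $S$ of $G$ and rescaling by $\lambda_n = \max_{s \in S} \ell_{T_n}(s)$, one extracts an equivariant Gromov--Hausdorff limit: a minimal isometric $G$--action on an $\R$--tree $T_\infty$. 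Finite generation of small subgroups of $G$ is used to guarantee $\lambda_n \to \infty$ and hence nontriviality of $T_\infty$.

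Next I would analyze arc stabilizers in $T_\infty$. The $\mathcal{M}$-finite condition bounds the combinatorics of conjugate intersections across the sequence, and a standard ultralimit argument then shows every arc stabilizer in $T_\infty$ is an elementary subgroup of $(G,\P)$. Applying Guirardel's (and Sela's) structure theorem for stable $\R$--tree actions with elementary arc stabilizers, $T_\infty$ decomposes into simplicial, axial, and surface-type components. In each case one reads off a nontrivial splitting of $G$ over an elementary subgroup (collapsing surface pieces to their dual simplicial trees when necessary), contradicting the hypothesis that $G$ admits no nontrivial elementary splittings.

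The principal obstacle---and the reason the original argument in \cite{DehnFillings} needed virtual polycyclicity---is ensuring that limit arc stabilizers are genuinely elementary in $(G,\P)$ and that the limit action is stable when peripheral subgroups may contain infinite torsion or be otherwise badly behaved. The plan is to show that the conjunction of $\mathcal{M}$-finiteness, co-slenderness, and finite generation of small subgroups serves as an adequate substitute for the Noetherian properties of polycyclic groups in the Rips-theoretic step; verifying this substitution, and in particular excluding pathological limit arcs whose stabilizers are not elementary in the original pair, is the technically delicate part of the argument.
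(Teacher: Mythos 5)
Your outline reproduces, in broad strokes, the strategy actually used for this theorem (which the paper quotes from Groves--Manning and does not reprove; its own relative analogue, Theorem~\ref{thm: elementary splittings is preserved}, is proved by the same scheme): argue by contradiction with a stably faithful sequence of fillings each admitting an elementary splitting, pass to a rescaled limit $\R$--tree $T_\infty$ with a nontrivial $G$--action and elementary arc stabilizers, and invoke the Rips-machine structure theorem to produce an elementary splitting of $G$. So the skeleton is right. However, as written the proposal has concrete gaps at exactly the points where the work lies.

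First, you omit the acylindricity upgrade, which is the engine of the whole argument. Before any limit is taken, each elementary splitting of $\bar G_n$ must be replaced by a $(2,C)$--acylindrical elementary splitting with $C$ \emph{uniform in $n$} (here $C=2C(G,\P)$, using $C$--almost malnormality of $\bar\P_n$ and $C(\bar G_n,\bar\P_n)\le C(G,\P)$ for long fillings); the $\mathcal{M}$--finite and co-slender hypotheses are consumed precisely in this step, not in an "ultralimit bound on conjugate intersections." Without uniform acylindricity of the approximating actions there is no reason arc stabilizers of $T_\infty$ are small, and your claimed "standard ultralimit argument" does not go through. Second, the assertion that the preimage in $G$ of an elementary subgroup of $\bar G_n$ is elementary is false: that preimage contains the filling kernel $K_n$, an infinite normal subgroup, so the pulled-back actions $G\curvearrowright T_n$ have enormous edge stabilizers; control is only recovered after rescaling and passing to the limit. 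Third, the hypothesis that small subgroups of $G$ are finitely generated is misassigned. It is not what forces $\lambda_n\to\infty$ (nontriviality of $T_\infty$ comes from stable faithfulness of the filling sequence); it is used at the end, to promote the \emph{small} arc stabilizers of $T_\infty$ to \emph{elementary} ones and to obtain stability, so that the Bestvina--Feighn/Guirardel--Levitt machine applies --- indeed, identifying this as the only place that hypothesis is used is exactly how the present paper removes it in the relative setting. As it stands, the technically delicate part you flag at the end is the proof, and the proposal does not supply it.
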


We study the relative case of this theorem and show that the assumption of small subgroups being finitely generated is no longer needed if the splittings (or non-splittings) are assumed to be relative to $\P$. This required certain observations in the proofs contained in \cite{DehnFillings} and an application of a relative version of Rips machine due to \cite{Guirardel2012SplittingsAA}. We prove the following theorem. 

\begin{theorem}
\label{thm: elementary splittings is preserved}
Let $(G, \P)$ be a relatively hyperbolic group pair that admits no elementary splitting relative to $\P$. Then all sufficiently long $\mathcal{M}$--finite fillings $(G, \P) \twoheadrightarrow (\bar{G}, \bar{\P})$ have the property that $\bar{G}$ admits no elementary splitting relative to $\bar{\P}$.
\end{theorem}

Next we study the boundary of sufficiently long Dehn fillings. Groves and Manning prove the following theorem with some assumptions on peripheral subgroups.

\begin{theorem}\cite[Theorem 1.9]{DehnFillings}
Suppose that $(G, \P)$ is relatively hyperbolic, with $\mathcal{P}$ consisting of virtually polycyclic groups. Suppose further that the Bowditch boundary $\partial(G, \P)$ is connected with no cut point. Then for all sufficiently long $\mathcal{M}$-finite fillings $(G, \P) \rightarrow$ $(\bar{G}, \bar{\P})$, the resulting boundary $\partial\left(\bar{G}, \bar{\P}^{\infty}\right)$ is connected and has no cut points.
\end{theorem}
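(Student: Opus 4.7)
The plan is to translate the topological hypothesis about the Bowditch boundary into a group-theoretic statement about elementary splittings, apply the Groves--Manning preservation result \cite[Theorem 1.6]{DehnFillings} to the filling, and then translate back. The central ingredient is Bowditch's characterization: for a one-ended relatively hyperbolic pair $(H, \mathcal{Q})$, the Bowditch boundary $\partial(H, \mathcal{Q})$ is connected if and only if $H$ admits no nontrivial splitting relative to $\mathcal{Q}$ over a finite subgroup, and, given connectedness, $\partial(H, \mathcal{Q})$ has no global cut point if and only if $H$ admits no nontrivial splitting relative to $\mathcal{Q}$ over a two-ended subgroup with non-parabolic edge group. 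Combined, $\partial(H, \mathcal{Q})$ is connected and cut-point-free exactly when $H$ has no nontrivial elementary splitting relative to $\mathcal{Q}$.

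Applied to the input, the assumption that $\partial(G, \P)$ is connected with no cut points yields that $G$ admits no nontrivial elementary splitting relative to $\P$. To invoke \cite[Theorem 1.6]{DehnFillings} I would verify its hypotheses. Since each $P \in \P$ is virtually polycyclic, every subgroup of every $P$ is finitely generated; coupled with relative hyperbolicity, this forces every small subgroup of $G$ to be either finite, two-ended, or parabolic, and hence finitely generated. The co-slender and $\mathcal{M}$--finite conditions on allowed fillings are compatible with virtual polycyclicity. Thus \cite[Theorem 1.6]{DehnFillings} yields a length threshold beyond which every sufficiently long $\mathcal{M}$--finite co-slender filling $(G, \P) \twoheadrightarrow (\bar{G}, \bar{\P})$ produces a quotient $\bar{G}$ with no nontrivial elementary splitting relative to $\bar{\P}$.

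To close the argument I would pass from $\bar{\P}$ to $\bar{\P}^{\infty}$ and re-apply Bowditch's criterion. The relatively hyperbolic Dehn filling theorem of Groves--Manning and Osin guarantees that $(\bar{G}, \bar{\P}^{\infty})$---where $\bar{\P}^{\infty}$ retains only the infinite images of the peripheral subgroups---is itself relatively hyperbolic for sufficiently long fillings. Any nontrivial elementary splitting of $\bar{G}$ relative to $\bar{\P}^{\infty}$ would descend to one relative to $\bar{\P}$ (the two collections differ only by finite factors, which can be absorbed into vertex groups without creating new edge types), contradicting the previous paragraph. Applying Bowditch's theorem to $(\bar{G}, \bar{\P}^{\infty})$ then gives that $\partial(\bar{G}, \bar{\P}^{\infty})$ is connected with no cut points, as required.

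The main obstacle I anticipate is the bookkeeping between the three pairs $(G, \P)$, $(\bar{G}, \bar{\P})$, and $(\bar{G}, \bar{\P}^{\infty})$, particularly confirming that elementary edge groups appearing in hypothetical splittings of $\bar{G}$ relative to $\bar{\P}^{\infty}$ lift, via the filling quotient, to elementary (not merely small) subgroups of $G$ in a manner compatible with the kernel of the filling. The virtually polycyclic hypothesis is precisely the technical lever that tames this bookkeeping---it forces fillings to have well-controlled kernels and guarantees that small subgroups of $G$ are finitely generated, which is the hypothesis needed to cite \cite[Theorem 1.6]{DehnFillings}. Removing it, which is the focus of the present paper, requires the relative Rips-machine argument indicated in the introduction.
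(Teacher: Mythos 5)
There is a genuine gap at the very first step: the dictionary you set up between the topology of the Bowditch boundary and elementary splittings is wrong. Given connectedness, a \emph{global cut point} of $\partial(H,\mathcal{Q})$ corresponds to a nontrivial \emph{peripheral} splitting of $(H,\mathcal{Q})$ --- a splitting over a parabolic subgroup in which all peripheral subgroups are elliptic (this is Theorem~\ref{thm: cutpoint implies peripheral splitting}, i.e.\ \cite[Theorem 1.1]{dasgupta}, and Bowditch's earlier version of it) --- and \emph{not} to a splitting over a two-ended non-parabolic subgroup. Splittings over two-ended subgroups are detected by cut \emph{pairs} and local cut points, not by global cut points. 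Consequently your combined claim, that $\partial(H,\mathcal{Q})$ is connected and cut-point-free exactly when $H$ has no nontrivial elementary splitting relative to $\mathcal{Q}$, is false: a closed hyperbolic surface group (with empty peripheral structure) has circle boundary, which is connected with no cut points, yet the group splits over $\mathbb{Z}$. Because of this, the hypothesis of \cite[Theorem 1.6]{DehnFillings} --- that $G$ admits no nontrivial elementary splitting --- simply cannot be extracted from the hypothesis of the present theorem, and your argument collapses before the filling theorem is ever applied.

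The correct translation is weaker: connectedness with no cut point gives relative one-endedness together with the non-existence of proper peripheral splittings, and it is the persistence of \emph{these} two properties under sufficiently long $\mathcal{M}$--finite fillings that must be established; this is the content of Theorem~\ref{thm: parabolic splittings is preserved} (the analogue in \cite{DehnFillings} is the statement about peripheral splittings, not Theorem 1.6 about all elementary splittings). One then returns to topology by applying Bowditch's connectivity criterion \cite[Theorem 10.1]{Bowditch12_relHyp} and the cut-point theorem to the pair $(\bar{G},\bar{\P}^{\infty})$. Your closing remarks about the bookkeeping between $\bar{\P}$ and $\bar{\P}^{\infty}$ and about lifting edge groups through the filling identify real technical points, but they are secondary to the incorrect equivalence above, which is where the proof actually fails.
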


We show that the assumption of peripheral subgroups being virtually polycyclic is not necessary. In this context we use JSJ theory (see \cite{GuirardelLevitt17_jsjDecom}) and a cut point theorem due to \cite{dasgupta} to study the boundary. We prove the following theorem.

\begin{theorem}
Suppose $(G, \P)$ is relatively hyperbolic. Suppose further that the Bowditch boundary $\partial (G, \P)$ is connected with no cut point.

Then for all sufficiently long $\mathcal{M}$--finite fillings $(G, \P) \twoheadrightarrow (\bar{G}, \bar{\P})$, the resulting boundary of $\partial (\bar{G}, \bar{\P}^{\infty} )$ is connected and has no cut points.
\end{theorem}

\textit{Acknowledgements.} We thank Christopher Hruska and Daniel Groves for helpful comments.

\section{Preliminaries}

In this section we put definitions and results used in this paper. For more details see \cite{DehnFillings} and \cite{dasgupta}. We begin with a definition for relatively hyperbolic group pairs. There are several equivalent definitions of relative hyperbolicity. The definition we use here is from \cite{MackeySisto20_quasiHypPlane}. It is equivalent to Bowditch \cite{Bowditch12_relHyp} and \cite{DehnFillings}. See \cite[Theorem 5.1]{Hruska10_relHypRelQuasiConvexity} for more details.

\begin{definition}
A geodesic space $X$ is \emph{$\delta$--hyperbolic} if each side of a geodesic triangle lies in the $\delta$--neighborhood of the union of the other two sides.
\end{definition}

\begin{definition}
A \emph{group pair} $(\Gamma,\mathbb{P})$ consists of a group $\Gamma$ together with a finite collection $\mathbb{P}$ of subgroups of $\Gamma$.
Given a group pair $(\Gamma,\mathbb{P})$, an action of $\Gamma$ on $X$ is \emph{relative to $\mathbb{P}$} if each member of $\mathbb{P}$ has a fixed point in $X$.
\end{definition}

\begin{definition}[Horoball]

Let $\Gamma$ be a connected graph with $V$ and $E$ denoting the set of vertices and edges of $\Gamma$, such that every edge has length $1$. Let $T = [0, 1]\times [1, \infty) \subset \mathbb{H}^2$ in the upper half plane model of the hyperbolic plane. Glue a copy of $T$ to each edge in $E$ along $[0, 1] \times \{ 1\} $ and identify the rays $\{v\} \times [1, \infty)$ for all $v \in V$. The quotient space with the natural path metric is defined as \textit{the} horoball $\mathcal{B}(\Gamma)$.
\end{definition}

\begin{definition}[Cusped space, relatively hyperbolic group pair, peripheral subgroups]
Let $G$ be a finitely generated group and $\P = \{P_1, \cdots, \P_k\}$ is a collection of proper finitely generated subgroups of $G$. Suppose $S$ be a finite generating set for $G$, so that $ S \cap P_i$ generates $P_i$ for each $i = 1, \cdots , k$. 

Let $\Gamma(G)= \Gamma(G, S)$ be the Cayley graph of $G$ with respect to $S$, with word metric $d_G$. Let $Y$ be the disjoint union of $\Gamma(G)$ and copies of $\mathcal{B}_{h P_i}$ of $\mathcal{B}(\Gamma(P_i, S \cap P_i)$ for each left coset $h P_i$ and each $P_i \in \P$. Let $X = X(G, \P) = Y / \sim $, where for each left coset $h P_i$  and each $g \in hP_i$ the equivalence relation $\sim$ identifies $g \in \Gamma(G, S)$ with $(g, 1) \in \mathcal{B}_{h P_i}$. We endow $X$ with the induced path metric $d$, which makes $(X, d)$ a proper geodesic metric space.

We say that $X$ is \textit{cusped space}. Furthermore $(G, \P)$ is a \textit{relatively hyperbolic group pair} if $(X, d)$ is Gromov hyperbolic. We call the members of $\P$ \textit{peripheral subgroups}.
\end{definition}

\begin{remark}
In this paper we consider only finitely generated relatively hyperbolic groups. We do not assume that the elements of $\P$ are infinite. We denote the collection of infinite members $\P$ by $\P^{\infty}$ and collection of nonhyperbolic subgroups as $\P^{red}$.
\end{remark}

\begin{definition}[Elementary subgroup]

Any subgroup of conjugates of the members $\P$ is called \textit{parabolic subgroup}. A subgroup $E < G$ is \textit{elementary} if it is either finite, two ended or parabolic.
\end{definition}

\begin{definition}[Elementary splitting] Suppose a relatively hyperbolic group $G$ splits as a graph of groups such that the edge groups are elementary subgroups of $G$. We call such splittings as \textit{elementary splittings}.
\end{definition}

Let $G$ be any group, and let $\mathbb{A}$ be a family of subgroups closed under conjugation.
Suppose $G$ acts on a simplicial tree $T$ without inversions and with no proper invariant subtree.
Then $T$ is an \emph{$\mathbb{A}$--tree} if each edge stabilizer is a member of $\mathbb{A}$.
Suppose $(G,\mathbb{P})$ is a group pair. An \emph{$(\mathbb{A},\P)$--tree} is an $\mathbb{A}$--tree $T$ such that each member of $\P$ has a fixed vertex in $T$.
Two $(\mathbb{A},\P)$--trees are \emph{equivalent} if there are $\Gamma$--equivariant maps $T\to T'$ and also $T'\to T$. If $T$ is not a point, then we say that $G$ \textit{splits over} $\A$ \textit{relative to} $\P$.

\begin{definition}[Peripheral Splitting]

Suppose that $(G, \P)$ is a relatively hyperbolic group pair. A \textit{peripheral splitting} of $(G, \P)$ is a bipartite graph of groups with fundamental group $G$ where the vertex groups of one color are precisely the conjugates of peripheral subgroups $\P$.
\end{definition}

\begin{definition}

An action of a group $G$ on a tree $T$ in $(k, C)$-\textit{acylindrical} if the stabilizer of any segment of length at least $k+1$ has cardinality at most $C$.

\end{definition}

\begin{definition}

A collection of subgroups $\P$ os a group $G$ is $C$-\textit{almost malnormal} if there is a constant $C$ so that $$|P_1 \cap g P_2 g^{-1}) | > C, \quad \textrm{for} \quad g \in G, P_1, P_2 \in \P$$ implies $P_1 = P_2$ and $g \in P_1$.
\end{definition}

\begin{lemma} \cite[Lemma 3.3]{DehnFillings}
Suppose $(G, \P)$ is a relatively hyperbolic group pair. Then $\P$ is $C$-almost malnormal for some $C$.
\end{lemma}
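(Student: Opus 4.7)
The approach is geometric, carried out in the cusped space $X=X(G,\P)$, which is a proper $\delta$-hyperbolic space on which $G$ acts properly by isometries. Each left coset $hP_i$ corresponds to a horoball $\mathcal{B}_{hP_i}\subset X$ with a unique parabolic fixed point $\xi_{hP_i}\in\partial X$, and the setwise stabilizer of $\mathcal{B}_{hP_i}$ in $G$ is exactly $hP_ih^{-1}$. Fix $P_1,P_2\in\P$ and $g\in G$, and note that the coset equality $gP_2=P_1$ in $G$ is equivalent to the conditions $P_1=P_2$ and $g\in P_1$; these are precisely the conclusions of almost malnormality, so it suffices to produce a constant $C=C(G,\P)$ such that $gP_2\ne P_1$ forces $|P_1\cap gP_2g^{-1}|\le C$.

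Assume $gP_2\ne P_1$. Then $\mathcal{B}_{P_1}\ne \mathcal{B}_{gP_2}$, and the parabolic points $\xi_{P_1}$ and $\xi_{gP_2}$ are distinct. Every $h\in P_1\cap gP_2g^{-1}$ fixes both of these boundary points. Since every element of a peripheral subgroup acts on $X$ either elliptically or parabolically, and a parabolic isometry fixes a unique boundary point, the element $h$ must be elliptic. Thus $P_1\cap gP_2g^{-1}$ is a subgroup of elliptic isometries that simultaneously stabilizes both horoballs, and hence preserves their coarse intersection.

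Next I would invoke the standard geometric fact that the coarse intersection of two distinct horoballs in a proper $\delta$-hyperbolic space has diameter uniformly bounded by a constant $D=D(\delta)$ depending only on the hyperbolicity constant. This yields a uniform bound on the diameter of any orbit of $P_1\cap gP_2g^{-1}$ acting on $X$, and properness of the $G$-action on $X$ converts this orbit bound into a uniform cardinality bound $C$. \textbf{The main obstacle} is precisely this uniform coarse-intersection bound: although morally it follows from exponential divergence of horospheres with distinct basepoints, making the constant uniform (depending only on $\delta$ and not on the specific pair of horoballs) requires a careful thin-triangle computation combined with the horoball geometry built in to the construction of $X$. A secondary subtlety is ruling out infinite elliptic subgroups inside $P_1\cap gP_2g^{-1}$, but the same bounded-orbit argument together with properness handles this in one stroke.
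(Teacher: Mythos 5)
The paper offers no proof of this lemma: it is quoted directly from Groves--Manning \cite[Lemma 3.3]{DehnFillings}, so your proposal must be judged on its own merits. Your opening reduction, and the observation that every element of $H = P_1 \cap gP_2g^{-1}$ fixes the two distinct parabolic points and is therefore elliptic, are correct and are the right way in. The gap is in the step you yourself flag as the main obstacle, and it is worse than a missing computation: the fact you want to invoke is false as stated. Two horoballs in a proper $\delta$-hyperbolic space based at distinct boundary points can intersect in a set of arbitrarily large diameter (already in $\mathbb{H}^2$), so no bound $D(\delta)$ exists. What is true for the disjoint, equivariant family of horoballs in the cusped space is that for each $R$ the set $N_R(\mathcal{B}_{P_1}) \cap N_R(\mathcal{B}_{gP_2})$ has diameter bounded in terms of $R$ and the group (essentially bounded coset intersection), but this set may be empty --- the two horoballs can be at distance comparable to $|g|$ --- and then it controls no orbit of $H$ at all. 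The inference ``$H$ preserves a set of bounded diameter, therefore every orbit of $H$ is uniformly bounded'' is also a non sequitur: it bounds only the orbits of points lying in that set. The orbit you actually have in hand, $H\cdot 1 \subseteq P_1 \cap N_{|g|}(gP_2)$, has diameter bounded only in terms of $|g|$; that proves $H$ is finite, but not that $|H|\le C$ uniformly in $g$, which is the entire content of the lemma.

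There is a second gap at the final step: properness of the $G$-action on $X$ is not uniform over $X$. The metric contracts deep in a horoball, so a ball of radius $10\delta$ about a depth-$n$ point of $\mathcal{B}_{P_1}$ meets a $G$-orbit in a set whose cardinality grows with $n$; a bounded orbit therefore yields a uniform cardinality bound only if it is based at a point of uniformly bounded depth. The standard repair for both gaps is to work along a bi-infinite geodesic $\gamma$ from $\xi_{P_1}$ to $\xi_{gP_2}$ rather than with the horoballs themselves: $H$ coarsely preserves $\gamma$, and since each $h\in H$ is elliptic and fixes both endpoints, it displaces every point of $\gamma$ by at most a fixed multiple of $\delta$ (otherwise some power of $h$ would be hyperbolic). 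Because distinct horoballs have disjoint interiors, $\gamma$ must pass through a point $x_0$ of the Cayley graph, where the action is uniformly proper (free on vertices), and then $|H| \le |B_X(1,K\delta)\cap G|$ gives the desired uniform constant $C$. I recommend restructuring the argument around this geodesic, or simply citing the result as the paper does.
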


\begin{definition}
Suppose that $(G, \P)$ is a relatively hyperbolic group pair, and let $\mathcal{F}$ be the set of subgroups of $G$ which are either finite nonparabolic or contained in the intersection of two distinct maximal parabolic subgroups. Define $C(G, \P) = max \{ |F| \colon  F \in \mathcal{F} \}$.
\end{definition}

\begin{lemma}\cite[Lemma 4.4]{DehnFillings}
For $(G, \P)$ relatively hyperbolic, $$C(G, \P) < \infty$$.
\end{lemma}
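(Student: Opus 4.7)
The plan is to bound each of the two kinds of subgroups in $\mathcal{F}$ separately by a constant depending only on $(G,\P)$.

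For the second kind of member of $\mathcal{F}$ --- a subgroup contained in $P_1\cap P_2$ where $P_1,P_2$ are distinct maximal parabolic subgroups --- apply the preceding lemma: $\P$ is $C$-almost malnormal for some $C$. Every maximal parabolic subgroup of $G$ is a conjugate of a member of $\P$, so two distinct maximal parabolics intersect in a subgroup of order at most $C$. Any subgroup of that intersection therefore has order at most $C$, which handles this family.

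For the first kind --- a finite nonparabolic subgroup $F\le G$ --- exploit the isometric action of $G$ on the proper Gromov $\delta$-hyperbolic cusped space $X=X(G,\P)$. Since $F$ is finite, every $F$-orbit is bounded, and a standard circumcenter argument in $\delta$-hyperbolic geodesic spaces produces a quasi-center: a point $x_0\in X$ and a constant $D=D(\delta)$ such that $d(fx_0,x_0)\le D$ for every $f\in F$. Fix a depth $k>D$ large enough that the $D$-neighborhood of any point at depth $\ge k$ in a horoball $\mathcal{B}_{hP_i}$ lies entirely inside that horoball. If $x_0$ sat at depth $\ge k$ inside some horoball $\mathcal{B}_{hP_i}$, then the orbit $F\cdot x_0$ would remain inside that same horoball, so $F$ would preserve $\mathcal{B}_{hP_i}$ setwise and embed into its stabilizer $hP_ih^{-1}$, contradicting the assumption that $F$ is nonparabolic.

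Hence $x_0$ lies within uniformly bounded $X$-distance of some vertex $v\in\Gamma(G,S)$. After replacing $x_0$ by $v$ (absorbing the additive error into $D$), one has $d(fv,v)$ uniformly bounded for every $f\in F$. Since $G$ acts freely on its Cayley graph, the map $f\mapsto fv$ is injective, and its image sits inside a ball of bounded radius in the locally finite graph $\Gamma(G,S)$, which has only finitely many vertices (a bound depending on $|S|$ and $D$). This gives a universal bound on $|F|$, and the two bounds together yield $C(G,\P)<\infty$. The main obstacle is the depth step: one must verify from the explicit horoball construction that an orbit of diameter $\le D$ situated at depth $\ge k$ cannot escape the horoball, so that the small-displacement hypothesis really forces $F$ into the parabolic stabilizer rather than permitting some isometry to carry a deep point out through the horoball's boundary annulus.
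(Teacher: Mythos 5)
Your argument is essentially right, but note that the paper gives no proof of this lemma at all: it is quoted verbatim from \cite{DehnFillings}, so there is no internal argument to compare against, and what you have written is in substance the standard proof from that reference. Your decomposition is the correct one: almost malnormality of $\P$ bounds $|P_1\cap P_2|$ by $C$ for distinct maximal parabolics $P_1,P_2$ (every maximal parabolic being a conjugate of a member of $\P$), which disposes of the second family, and a quasi-centre in the proper $\delta$-hyperbolic cusped space, with displacement $D=D(\delta)$ independent of $F$, handles the finite nonparabolic subgroups. Two points should be tightened. For the step you flag yourself: the clean argument is not that the orbit ``cannot escape'' $\mathcal{B}_{hP_i}$, but that if $x_0$ lies at distance greater than $D$ from the Cayley graph inside $\mathcal{B}_{hP_i}$, then for each $f\in F$ the point $fx_0$ still lies off the Cayley graph and hence in the interior of a unique horoball; since $fx_0$ belongs to both $\mathcal{B}_{hP_i}$ and $f\mathcal{B}_{hP_i}=\mathcal{B}_{fhP_i}$, uniqueness forces $fhP_i=hP_i$, i.e.\ $f\in hP_ih^{-1}$, making $F$ parabolic. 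For the final count: $d_X(fv,v)\le R$ does \emph{not} bound the word length of $f$ by a function of $|S|$ and $R$, because the cusped metric is heavily distorted relative to $d_G$ (geodesics may shortcut through horoballs); the correct statement is that the vertices of $\Gamma(G,S)$ form a uniformly discrete subset of the proper space $X$, so a $d_X$-ball of radius $R$ meets only finitely many of them, with a count independent of $v$ by $G$-equivariance. With those repairs the uniform bound on $|F|$ follows and the lemma is proved.
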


\subsection{Dehn Fillings}

\begin{definition}
Suppose that $G$ is a group and $\P = \{ P_1, \cdots , P_n\}$ is a collection of subgroups. A \textit{Dehn filling} (or just \textit{filling}) of $(G, \P)$ is a quotient map: $\phi: G \to G/K$, where $K$ is the normal closure in $G$ of some collection $K_I \trianglelefteq P_i$. We write $$ G/K = G(K_1,\cdots, K_n)$$ for this quotient. The subgroups $K_1, \cdots , K_n$ are called the \textit{filling kernels}. We also write $\phi : (G, \P) \to (\bar{G},\bar{\P})$ where $\bar{\P}$ is the collection of images of all the $P\in \P$.

We say that a property holds \textit{for all sufficiently long fillings} of $(G, \P)$ if there is finite $\mathcal{B} \in G \setminus \{1\}$ so that whenever $K_i \cap \mathcal{B} = \emptyset $ for all $i$, the group $G/K$ has the property.
\end{definition}

\begin{proposition} \cite[Proposition 3.4]{DehnFillings} \label{prop: fillingsMalnormal}
If $(G, \P)$ is relatively hyperbolic, then $\P$ is $C$-almost malnormal, then for all sufficiently long fillings $(\bar{G},\bar{\P})$ of
$(G, \P)$, the collection $\bar{\P}$ is $C$-almost malnormal.
\end{proposition}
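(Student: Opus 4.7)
The plan is to reduce $C$-almost malnormality of $\bar{\P}$ in $\bar{G}$ to that of $\P$ in $G$ (the previous lemma) via a lifting argument, valid once the filling is long enough for the Groves--Manning Cohen--Lyndon-type structure theorem to apply.

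I would first enlarge the finite set $\mathcal{B} \subset G \setminus \{1\}$ defining ``sufficiently long'' so that it subsumes the finite data required by the Cohen--Lyndon-type theorem from \cite{GrovesManning08_DehnRelHyp}. The relevant consequences for this proof are that for each $P_m \in \P$, the composition $P_m \hookrightarrow G \twoheadrightarrow \bar{G}$ has kernel exactly $K_m$ (so $\bar{P}_m \cong P_m/K_m$), and that $K \cap g P_m g^{-1} = g K_m g^{-1}$ for suitably chosen double-coset representatives $g$.

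Assume for contradiction that some such filling $(\bar{G}, \bar{\P})$ admits $\bar{P}_i, \bar{P}_j \in \bar{\P}$ and $\bar{g} \in \bar{G}$ with $|\bar{P}_i \cap \bar{g} \bar{P}_j \bar{g}^{-1}| > C$, yet $\bar{P}_i \neq \bar{P}_j$ or $\bar{g} \notin \bar{P}_i$. Pick $C+1$ distinct elements $\bar{h}_0, \ldots, \bar{h}_C$ of the intersection. Using the previous isomorphism, lift them to distinct $h_\ell \in P_i$. Write each $\bar{h}_\ell = \bar{g} \bar{k}_\ell \bar{g}^{-1}$ with distinct $\bar{k}_\ell \in \bar{P}_j$, lift these to distinct $k_\ell \in P_j$, and choose any lift $g \in G$ of $\bar{g}$. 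Then $h_\ell \cdot (g k_\ell g^{-1})^{-1} \in K$ for every $\ell$.

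The crux is to invoke the Cohen--Lyndon description of $K$, together with the identity $K \cap g P_j g^{-1} = g K_j g^{-1}$, to absorb this residue: modify each $k_\ell$ within its $K_j$-coset (which does not alter $\bar{k}_\ell$) so that $h_\ell = g k_\ell g^{-1}$ holds in $G$. The $h_\ell$'s then constitute $C+1$ distinct elements of $P_i \cap g P_j g^{-1}$, and $C$-almost malnormality of $\P$ forces $P_i = P_j$ and $g \in P_i$, hence $\bar{P}_i = \bar{P}_j$ and $\bar{g} \in \bar{P}_i$, contradicting our assumption. The main obstacle is this residue-absorption step: one must verify that the $K$-residue really lies in $g K_j g^{-1}$ (so it can be swept into the indeterminacy of $k_\ell$), and that after adjustment the lifts remain pairwise distinct. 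This is precisely where the sufficiently-long-filling hypothesis, interpreted through Cohen--Lyndon, does its work, and where choosing $g$ as a short double-coset representative becomes essential.
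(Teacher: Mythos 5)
The key step of your argument does not go through, and the gap sits exactly where you flag it. From $\bar{h}_\ell = \bar{g}\bar{k}_\ell\bar{g}^{-1}$ you only get $h_\ell\,(g k_\ell g^{-1})^{-1} \in K$, where $K$ is the normal closure of the filling kernels. The Cohen--Lyndon theorem describes $K$ as a free product of conjugates of the $K_m$ and gives $g P_j g^{-1}\cap K = gK_jg^{-1}$, but the element $h_\ell\,(g k_\ell g^{-1})^{-1}$ is a product of an element of $P_i$ with an element of $g P_j g^{-1}$ and has no a priori reason to lie in $gK_jg^{-1}$ (or in any single conjugate of a filling kernel). In fact, asserting that this residue lies in $gK_jg^{-1}$ is \emph{equivalent} to asserting $h_\ell \in g P_j g^{-1}$, which is precisely the conclusion you are trying to reach; so the ``residue-absorption'' step is circular. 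What one actually needs is control of $(P_iK)\cap(gP_jg^{-1}K)$, i.e., of intersections of peripheral subgroups \emph{modulo} $K$, and that is the whole content of the proposition --- it does not follow from the algebraic structure of $K$ alone. (A secondary issue: the ``short double-coset representatives'' in Cohen--Lyndon index the free factors of $K$; they do not let you choose the lift $g$ of a prescribed $\bar{g}$ in a normal form relative to $P_i$ and $P_j$.)

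For comparison, the proof in Groves--Manning (which this paper only cites, giving no argument of its own) is geometric rather than algebraic: the constant $C$ from the almost-malnormality of $\P$ is extracted from the cusped space $X(G,\P)$ (elements of a large intersection of two maximal parabolics move points deep in two distinct horoballs a bounded amount, so properness bounds their number), and for sufficiently long fillings the quotient cusped space is uniformly hyperbolic and the quotient map is injective on metric balls of any fixed radius. This lets one run the same horoball argument in $\bar{X}$ and lift the finitely many offending elements isometrically, yielding the same constant $C$. If you want to repair your approach, you would need to import exactly this kind of input (uniform hyperbolicity of quotient cusped spaces and injectivity on balls, or an equivalent statement about images of relatively quasiconvex subgroups under long fillings), not just the Cohen--Lyndon structure of $K$.
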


\begin{definition}
A group $H$ is \textit{small} if $H$ has no subgroup isomorphic to a non-abelian free group.
A group $H$ is \textit{slender} if every subgroup of $H$ is finitely generated.
\end{definition}

\begin{definition}
Let $(G, \P)$ be a group pair and let $\mathcal{M}$ be the class of all finitely generated groups with more than one end. We say that a filling of $(G, \P)$ is $\mathcal{M}$-\textit{finite} if for all $P \in \mathcal{M} \cap \P$, the associate filling kernel $K \trianglelefteq P$ has finite index in $P$. We say that a filling of $(G, \P)$ is \textit{co-slender} if for all $P\in \P$ with associated filling $K$, the group $P/K$ is slender.
\end{definition}

\begin{remark}
Suppose $\phi : (G, \P) \to (\bar{G},\bar{\P})$ is a sufficiently long Dehn filling of a relatively hyperbolic group pair. Note that $(\bar{G},\bar{\P})$ is also relatively hyperbolic by \cite[Theorem 2.17]{DehnFillings}. Moreover $(\bar{G},\bar{\P}^{\infty})$ and $(\bar{G},\bar{\P}^{red})$ are also relatively hyperbolic group pairs though their Bowditch boundary may be different (see \cite[Section 7.2]{DehnFillings}).
\end{remark}

\section{Relative Elementary splittings}

In this section we study the elementary splittings of a relatively hyperbolic group pair $(G,\P)$ relative to $\P$. We upgrade such splittings to $(2, C)$-acylindrical splittings relative to $\P$. All of the results in this section can be easily deduced from the proofs in Section 3 and 4 in \cite{DehnFillings}. The slenderness related hypothesis becomes unnecessary as we consider elementary splittings relative to $\P$. We put the relevant statements for the sake of completion.

We begin by studying splittings over parabolic subgroups. The following lemma follows directly from the proof of \cite[Lemma 3.6]{DehnFillings}. In fact, as we consider relative splittings, the hypothesis does not require the slenderness of parabolic subgroups.

\begin{lemma}
\label{lem: parabolic to 2C acylindrical upgrade}
Suppose $(G, \P)$ is relatively hyperbolic where $\P$ is a $C$-almost malnormal collection of subgroups. If $G$ admits a nontrivial splitting over a parabolic group relative to $\P$, then $G$ admits a $(2, C)$--acylindrical splitting over a parabolic subgroup relative to $\P$.
\end{lemma}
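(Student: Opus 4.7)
The plan is to apply the tree of cylinders construction of Guirardel--Levitt to the given $(\text{parabolic},\P)$-tree $T$, with cylinders determined by the maximal-parabolic class of edge stabilizers. I would first reduce to considering edges with stabilizer of cardinality $>C$, since edges with smaller stabilizer cannot obstruct $(2,C)$-acylindricity. For such an edge $e$, the parabolic $G_e$ is contained in a unique maximal parabolic $P_e$: any two maximal parabolics containing $G_e$ would have intersection of cardinality $>C$, which by $C$-almost malnormality of $\P$ forces them to coincide.

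Next I would declare two edges equivalent when $P_e=P_{e'}$. A \emph{cylinder} is then a maximal connected subtree of $T$ whose edges lie in a single equivalence class; within each cylinder $Y$ the edge stabilizers are contained in a common maximal parabolic $P_Y$, and by the relative hypothesis $P_Y$ fixes some vertex of $T$. The convexity of fixed-point subtrees under subgroup actions on a tree, combined with almost malnormality, shows that two distinct cylinders meet in at most one vertex of $T$. Let $T_c$ be the tree whose vertices are the cylinders together with those vertices of $T$ lying in two or more cylinders, with edges joining each cylinder to its boundary vertices. This carries a natural $G$-action.

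It then remains to verify four properties of $T_c$: the edge stabilizers are parabolic (being subgroups of cylinder stabilizers, themselves parabolic); the action is relative to $\P$ (each $P\in\P$ fixed a vertex of $T$ and its image is fixed in $T_c$); the splitting is nontrivial ($T_c$ collapses to a point only if $T$ already did); and the action is $(2,C)$-acylindrical (a segment of length three must traverse at least two distinct cylinder vertices, so its pointwise stabilizer lies in $P_Y\cap gP_{Y'}g^{-1}$ with $P_Y\ne gP_{Y'}g^{-1}$, which by almost malnormality has cardinality at most $C$). The main obstacle is the step that distinct cylinders meet in at most one vertex of $T$; this is the standard subtlety of cylinder constructions, but here it follows cleanly from almost malnormality together with the relative hypothesis. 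The slenderness assumption of \cite[Lemma 3.6]{DehnFillings} is avoided entirely because we never need to split the maximal parabolic $P_Y$ further --- the relative hypothesis gives it a fixed vertex directly.
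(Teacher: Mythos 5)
Your argument is correct in outline but takes a genuinely different route from the paper. The paper never leaves the original Bass--Serre tree $T$: writing $H<P\in\P$ and assuming $|H|>C$, it uses the relative hypothesis to make $P$ elliptic and then argues directly, by a case analysis on whether $P$ fixes an edge of $T$, that $T$ itself is already $(1,C)$-- or $(2,C)$--acylindrical. You instead pass to the tree of cylinders $T_c$ for the relation ``contained in the same maximal parabolic.'' Both proofs exploit the same key point --- ellipticity of $P$, guaranteed by relativity, replaces the slenderness hypothesis of \cite{DehnFillings} --- but your construction is heavier and, in exchange, more robust: it would handle a splitting with several orbits of parabolic edge groups, and the connectedness of cylinders (each $G_e$ fixes both $e$ and the vertex $q$ fixed by $P_e$, hence the whole path from $e$ to $q$, so every edge $f$ on that path has $|G_f|>C$ and $P_f=P_e$ by uniqueness) is exactly where the relative hypothesis enters, as you indicate.

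One step as written is wrong and needs repair: ``$T_c$ collapses to a point only if $T$ already did'' is false for trees of cylinders in general --- $T_c$ is a single vertex whenever $T$ consists of a single cylinder, which can happen for nontrivial $T$. Here you must exclude that case separately: if all edges of $T$ lay in one cylinder $Y$, then $G$ would preserve $Y$ and hence normalize $P_Y$; since $|P_Y|>C$, almost malnormality forces the normalizer of $P_Y$ to equal $P_Y$, so $G=P_Y$ would be a conjugate of a peripheral subgroup, contradicting that members of $\P$ are proper. The same normalizer computation is what justifies your parenthetical claim that cylinder stabilizers $G_Y$ are parabolic ($G_Y\le P_Y$), which you assert but do not prove. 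With those two points filled in, your proof goes through.
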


\begin{proof}
Let $H$ be a parabolic subgroup in $G$ and suppose $G$ splits over $H$ relative to $\P$. Without loss of generality, $H < P$ for some $P \in \P$. Since $\P$ is $C$-almost malnormal, if $|H| \leq C$ then we are done. Otherwise suppose that $|H| > C$.

Let $T$ be the Bass-Serre tree for this splitting. $P$ fixes some point $p \in T$. 

If $P$ fixes some edge $e \in T$, then each stabilizer is $g^{-1}P{g}$ for some $g \in G$ and $P =H$. Since $\P$ is $C$-almost malnormal, any segment of length $2$ or more has stabilizer of size $C$ or less. Hence the action of $G$ is $(1, C)$-acylindrical. 

If $P$ does not fix a vertex and not an edge in $T$, then stabilizer of any segment of length 3 or more is contained in a pair of conjugates of $P$. Hence the size of the stabilizer is $C$ or less. In this case the action of $G$ on $T$ is $(2, C)$-acylindrical.
\end{proof}

The following proposition follows immediately from \cite[Proposition 3.7]{DehnFillings} and does not require co-slenderness hypothesis Lemma~\ref{lem: parabolic to 2C acylindrical upgrade} does not need slenderness hypothesis. We include a proof for the sake of completion.
    
\begin{proposition}
\label{prop: parabolic to 2C acylindrical upgrade in fillings}
Suppose $(G, \P)$ is a relatively hyperbolic group pair. For all sufficiently long fillings $(G, \P) \to (\bar{G}, \bar{P})$, if $\bar{G}$ admits a nontrivial splitting over a parabolic subgroup relative to $\P$ then $\bar{G}$ admits a non trivial $(2, C)$--acylindrical splitting over a parabolic subgroup relative to $\P$.
\end{proposition}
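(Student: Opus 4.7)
The plan is to combine the upgrade statement of Lemma~\ref{lem: parabolic to 2C acylindrical upgrade}, applied in the filled group $(\bar G,\bar\P)$, with two persistence facts for sufficiently long fillings: persistence of relative hyperbolicity and persistence of $C$--almost malnormality of the peripheral collection. Lemma~\ref{lem: parabolic to 2C acylindrical upgrade} takes a purely internal hypothesis (relative hyperbolicity plus $C$--almost malnormality of the peripherals) and outputs the desired acylindrical splitting, so the whole proposition reduces to checking that those hypotheses survive sufficiently long fillings.

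First I would invoke \cite[Theorem 2.17]{DehnFillings} (cited in the remark after the Dehn filling definition) to get that for all sufficiently long fillings $(G,\P)\twoheadrightarrow(\bar G,\bar\P)$, the pair $(\bar G,\bar\P)$ is again relatively hyperbolic. Next, by the almost malnormality lemma $\P$ is itself $C$--almost malnormal for some constant $C$, and then Proposition~\ref{prop: fillingsMalnormal} guarantees that there is a (possibly larger) finite exceptional set $\mathcal B\subset G\setminus\{1\}$ so that whenever the filling avoids $\mathcal B$, the image collection $\bar\P$ is also $C$--almost malnormal with the same constant $C$. Taking the intersection of the two finite exceptional sets keeps us in the ``sufficiently long'' regime.

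Now assume that such a sufficiently long filling $(\bar G,\bar\P)$ admits a nontrivial splitting over a parabolic subgroup relative to $\bar\P$. Since $(\bar G,\bar\P)$ is relatively hyperbolic and $\bar\P$ is $C$--almost malnormal, Lemma~\ref{lem: parabolic to 2C acylindrical upgrade} applies verbatim to $(\bar G,\bar\P)$ and produces a nontrivial $(2,C)$--acylindrical splitting of $\bar G$ over a parabolic subgroup relative to $\bar\P$, which is exactly the conclusion.

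Honestly there is no real obstacle here: the proposition is a packaging result. The only thing to be careful about is that the constant $C$ used for the acylindricity in the conclusion must be the one produced by $C$--almost malnormality of $\bar\P$, so one must apply Proposition~\ref{prop: fillingsMalnormal} with the specific $C$ coming from the almost malnormality lemma applied to $(G,\P)$, rather than choosing $C$ independently in the filling. Once the bookkeeping of constants and exceptional finite sets is written down, the proof is literally: ``pass to sufficiently long fillings, apply Lemma~\ref{lem: parabolic to 2C acylindrical upgrade} in $(\bar G,\bar\P)$.''
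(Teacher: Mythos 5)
Your proposal is correct and follows essentially the same route as the paper: invoke Proposition~\ref{prop: fillingsMalnormal} to get $C$--almost malnormality of $\bar\P$ for sufficiently long fillings, then apply Lemma~\ref{lem: parabolic to 2C acylindrical upgrade} to $(\bar G,\bar\P)$. Your extra care in also citing persistence of relative hyperbolicity and in tracking the constant $C$ is a slight improvement in completeness over the paper's two-line argument, but not a different method.
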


\begin{proof}
If $(\bar{G}, \bar{\P})$ is a sufficiently long filling, then by Proposition~\ref{prop: fillingsMalnormal}, the collection $\bar{\P}$ is $C$-malnormal. Then by Lemma~\ref{lem: parabolic to 2C acylindrical upgrade}, we conclude that $\bar{G}$ admits a non trivial $(2, C)$-acylindrical splitting over a parabolic group relative to $\P$.
\end{proof}

\begin{lemma}\cite[Corollary 4.5]{DehnFillings}
\label{lem: C Persists}
Let $(G,\P)$ be relatively hyperbolic group pair. For all sufficiently long fillings $(\bar{G}, \bar{\P})$, we have $C(\bar{G}, \bar{\P}) \leq C(G,\P)$.
\end{lemma}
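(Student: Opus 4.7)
The plan is to take a subgroup $\bar{F} \leq \bar{G}$ realizing the maximum $C(\bar{G},\bar{\P})$ and lift it to a subgroup $F \leq G$ of the same order lying in $\mathcal{F}$; this would yield $C(\bar{G},\bar{\P}) = |\bar{F}| = |F| \leq C(G,\P)$. By definition such $\bar{F}$ is finite and is either (i) nonparabolic in $\bar{G}$, or (ii) contained in an intersection $\bar{P}_1 \cap \bar{P}_2$ of two distinct maximal parabolic subgroups of $\bar{G}$; in case (ii) the finiteness of $\bar{F}$ comes for free from Proposition~\ref{prop: fillingsMalnormal}, which tells us that $\bar{\P}$ inherits $C$--almost malnormality for sufficiently long fillings.

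The key ingredient I would invoke is the standard lifting of finite subgroups under sufficiently long Dehn fillings, a consequence of the relatively hyperbolic Dehn filling theorem \cite[Theorem 2.17]{DehnFillings}. For sufficiently long fillings, every finite subgroup $\bar{F}$ of $\bar{G}$ arises as the injective image of a finite subgroup $F \leq G$ of the same order; moreover, if $\bar{F}$ is contained in a maximal parabolic subgroup $\bar{P}$ of $\bar{G}$, then after conjugation $F$ may be placed inside the corresponding maximal parabolic subgroup $P$ of $G$.

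Given this lifting, case (i) is easy: if $F$ were parabolic in $G$, say $F \leq g P g^{-1}$, then its image $\bar{F} \leq \bar{g}\bar{P}\bar{g}^{-1}$ would be parabolic in $\bar{G}$, contradicting the hypothesis; so $F$ is finite nonparabolic and belongs to $\mathcal{F}$. For case (ii), I would use the bijection between conjugacy classes of maximal parabolic subgroups in $G$ and in $\bar{G}$ (again supplied by the filling theorem) to lift $\bar{P}_1$ and $\bar{P}_2$ to two distinct maximal parabolics $P_1, P_2$ in $G$, and then use the parabolic-preserving form of the lifting statement twice to arrange $F \leq P_1 \cap P_2$, again placing $F$ in $\mathcal{F}$.

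The main obstacle is case (ii): one must ensure that the lift of $\bar{F} \leq \bar{P}_1 \cap \bar{P}_2$ sits inside an intersection of two \emph{distinct} maximal parabolic subgroups of $G$, rather than collapsing into a single parabolic. Ruling out this collapse requires combining the almost malnormality preserved by Proposition~\ref{prop: fillingsMalnormal} with the parabolic correspondence supplied by the Dehn filling theorem, which is precisely where the ``sufficiently long'' hypothesis is used.
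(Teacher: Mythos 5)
The paper does not actually prove this lemma; it is quoted verbatim from \cite[Corollary 4.5]{DehnFillings}, so the comparison is against the argument there. Your overall strategy --- split according to the two kinds of subgroups in $\mathcal{F}$, and in the finite nonparabolic case lift $\bar{F}$ to a finite subgroup $F\leq G$ of the same order via the finite-subgroup lifting theorem for sufficiently long fillings --- is exactly the right one, and your case (i) is complete: if the lift $F$ were parabolic its image would be parabolic, so $F$ is finite nonparabolic, lies in $\mathcal{F}$, and $|\bar{F}|=|F|\leq C(G,\P)$.

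Case (ii), however, contains a genuine gap, and you have correctly identified where it is but not closed it. The lifting theorem produces a lift of $\bar{F}$ only up to conjugacy, and applying it ``twice'' relative to $\bar{P}_1$ and $\bar{P}_2$ separately yields two finite subgroups $F_1\leq P_1$ and $F_2\leq P_2$ whose images are each conjugate to $\bar{F}$; there is no mechanism in your argument forcing a single subgroup $F$ to land in $P_1\cap P_2$ for two \emph{distinct} maximal parabolics of $G$, which is what membership in $\mathcal{F}$ requires. The standard argument avoids lifting in this case entirely: one may take the almost-malnormality constant $C$ of Proposition~\ref{prop: fillingsMalnormal} to be the maximal order of an intersection of two distinct maximal parabolic subgroups of $G$, and since every such intersection belongs to $\mathcal{F}$ one has $C\leq C(G,\P)$. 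Proposition~\ref{prop: fillingsMalnormal} then says that for sufficiently long fillings $\bar{\P}$ is $C$-almost malnormal with this same constant, so any $\bar{F}$ contained in the intersection of two distinct maximal parabolics of $\bar{G}$ already satisfies $|\bar{F}|\leq C\leq C(G,\P)$, with no lifting and no need to match up the two parabolics across the filling. Replacing your case (ii) with this observation yields a complete proof.
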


Next we examine the splittings over finite subgroups. Notice that the following lemma works without co-slenderness hypothesis as Proposition~\ref{prop: parabolic to 2C acylindrical upgrade in fillings} does not require co-slenderness hypothesis. 
    
\begin{lemma}
\label{cor: relative finite splitting}

Suppose $(G, \P)$ is relatively hyperbolic. For all sufficiently long fillings $(G, \P) \to (\bar{G}, \bar{P})$, if $\bar{G}$ admits a non trivial splitting over a finite group relative to $\P$, then $\bar{G}$ admits a nontrivial $(2, C)$-acylindrical splitting over a finite or parabolic group relative to $\P$, where $C = C (G, \P)$.
\end{lemma}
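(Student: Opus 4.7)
The plan is to combine Lemma~\ref{lem: C Persists} with Proposition~\ref{prop: parabolic to 2C acylindrical upgrade in fillings} by dichotomizing on the cardinality of edge stabilizers. First I would choose the filling long enough so that simultaneously $\bar{\P}$ is $C$-almost malnormal (by Proposition~\ref{prop: fillingsMalnormal}), $C(\bar{G}, \bar{\P}) \leq C(G, \P) = C$ (by Lemma~\ref{lem: C Persists}), and the conclusion of Proposition~\ref{prop: parabolic to 2C acylindrical upgrade in fillings} is available. Let $T$ denote the Bass--Serre tree of the assumed nontrivial splitting of $\bar{G}$ over finite subgroups relative to $\bar{\P}$.

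If every edge stabilizer of $T$ has cardinality at most $C$, then the stabilizer of any segment of length $\geq 1$ is contained in the stabilizer of any single edge along it, so it has size at most $C$. In this case $T$ itself already witnesses a $(0,C)$-acylindrical, and in particular $(2,C)$-acylindrical, nontrivial splitting of $\bar{G}$ over finite subgroups relative to $\bar{\P}$, and the conclusion holds directly.

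Otherwise there exists an edge $e$ of $T$ whose stabilizer $F$ satisfies $|F| > C \geq C(\bar{G}, \bar{\P})$. Because $F$ is finite but its cardinality exceeds $C(\bar{G},\bar{\P})$, the definition of $C(\bar{G},\bar{\P})$ forces $F$ to be parabolic and contained in a unique maximal parabolic of $\bar{G}$. I would then equivariantly collapse every $\bar{G}$-orbit of edges of $T$ whose stabilizer has size at most $C$, and, if necessary, pass to the unique minimal $\bar{G}$-invariant subtree of the quotient, obtaining a nontrivial tree $T'$ all of whose edge stabilizers are parabolic. Each $P \in \bar{\P}$ fixes a vertex of $T$ and hence a vertex of $T'$, since equivariant collapses preserve ellipticity, so $T'$ gives a nontrivial splitting of $\bar{G}$ over a parabolic subgroup relative to $\bar{\P}$. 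Proposition~\ref{prop: parabolic to 2C acylindrical upgrade in fillings} then yields a nontrivial $(2,C)$-acylindrical splitting of $\bar{G}$ over a parabolic subgroup relative to $\bar{\P}$, which in particular is a splitting over a finite or parabolic subgroup as required.

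The main obstacle, and the step I expect to need care, is verifying that the collapsed tree $T'$ in the second case is still nontrivial after passing to its minimal $\bar{G}$-invariant subtree and that it still contains at least one edge with parabolic stabilizer. This reduces to a routine minimality argument using that $F$ is a proper subgroup of the infinite group $\bar{G}$ together with the standard fact that minimal subtrees of $(\mathbb{A},\bar{\P})$-trees inherit the ellipticity of the peripheral subgroups; everything else is bookkeeping on top of the already-established propositions.
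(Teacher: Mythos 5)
Your proof is correct and follows essentially the same route as the paper: it dichotomizes on whether a finite edge stabilizer is parabolic or not, using Lemma~\ref{lem: C Persists} to obtain the $(0,C)$-acylindrical bound in the non-parabolic case and Proposition~\ref{prop: parabolic to 2C acylindrical upgrade in fillings} in the parabolic case. Your extra collapsing step, which handles a tree whose edge stabilizers are of both kinds, is a reasonable refinement that the paper elides by implicitly working with a one-edge splitting over a single finite subgroup $K$.
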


\begin{proof}
Suppose $\bar{G}$ admits a splitting over a finite subgroup $K$ relative to $\P$. Then $K$ is a either parabolic or non-parabolic.

If $K$ is non-parabolic, then by Lemma~\ref{lem: C Persists}, if the filling is sufficiently long, then $|K|\leq C$. Therefore the Bass-Serre tree corresponding to the splitting is $(0, C)$-acylindrical.

If $K$ is parabolic then by Proposition~\ref{prop: parabolic to 2C acylindrical upgrade in fillings} we are done.
\end{proof}
    
Next we examine the case of splittings over two-ended subgroups relative to $\P$. Notice that the relative version of \cite[Lemma 4.9]{DehnFillings} does not require the co-slenderness hypothesis as Lemma~\ref{lem: parabolic to 2C acylindrical upgrade} does not need slenderness hypothesis. Hence we have the following lemma from the proof of \cite[Lemma 4.9]{DehnFillings}.
    
\begin{lemma}
\label{lem: two ended non parabolic to acylindrical}
Let $(G, \P)$ be relatively hyperbolic and let $C = 2C(G,\P)$. If $G$ admits a nontrivial splitting over a two-ended non-parabolic subgroup relative to $\P$, then $G$ admits a nontrivial $(2, C)$--acylindrical splitting over an elementary subgroup relative to $\P$.
\end{lemma}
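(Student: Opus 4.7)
The plan is to mimic the argument of \cite[Lemma 4.9]{DehnFillings}, with the observation that every appeal to co-slenderness there dissolves in our relative setting: any intermediate splitting over a parabolic subgroup can be upgraded to a $(2,C)$-acylindrical elementary splitting via Lemma~\ref{lem: parabolic to 2C acylindrical upgrade}, which does not require slenderness.

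First I would take the Bass--Serre tree $T$ of the given splitting, whose edge stabilizers are conjugates of the two-ended non-parabolic subgroup $H$, and let $E := E^+(H)$ denote the unique maximal two-ended subgroup of $G$ containing $H$. Because $H$ is loxodromic in the Bowditch boundary, $E$ is the setwise stabilizer of the boundary pair $\partial H$, is elementary, and satisfies $[E:H]<\infty$. Crucially, any two distinct $G$-conjugates of $E$ intersect in a finite non-parabolic subgroup of size at most $C(G,\P)$, since distinct maximal two-ended non-parabolic subgroups have disjoint boundary pairs.

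Next I would equivariantly modify $T$ to a $G$-tree $T'$ with edge stabilizers in the conjugacy class of $E$, by first collapsing the fixed subtree of each conjugate of $E$ (these form a $G$-invariant subforest, since a maximal two-ended non-parabolic subgroup is its own setwise stabilizer in $G$) and then enlarging the edge stabilizers from conjugates of $H$ to conjugates of $E$; both operations are legitimate because $[E:H]<\infty$. To verify $(2, 2C(G,\P))$-acylindricity of $T'$, suppose $g$ fixes three consecutive edges with stabilizers $E_1, E_2, E_3$ (conjugates of $E$). If some $E_i \neq E_j$, then $g$ lies in the intersection of two distinct maximal two-ended non-parabolic subgroups, forcing $|g| \leq C(G,\P)$ by the intersection bound above. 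If instead $E_1 = E_2 = E_3 = E^*$, the prior collapse forbids $E^*$ from fixing such a segment pointwise in $T'$, so the stabilizer reduces to a finite torsion subgroup of $E^*$; the factor $2$ in $2C(G,\P)$ absorbs the $[E^*:H^*]$-index contribution coming from the enlargement step.

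The main obstacle is performing the collapse-and-enlargement procedure $G$-equivariantly while preserving both relativity to $\P$ and non-triviality of $T'$. If non-triviality fails at any stage, then the intermediate tree is a non-trivial splitting over a parabolic subgroup, and Lemma~\ref{lem: parabolic to 2C acylindrical upgrade} immediately delivers the desired $(2,C)$-acylindrical splitting over an elementary subgroup relative to $\P$.
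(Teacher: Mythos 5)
Your route is genuinely different from the paper's: the paper never performs surgery on the Bass--Serre tree of the two-ended splitting, but instead adjoins the maximal two-ended subgroup $\hat{H}\supseteq H$ to the peripheral structure, observes that $(G,\P\cup\{\hat{H}\})$ is again relatively hyperbolic with $C(G,\P\cup\{\hat{H}\})\le 2C(G,\P)$ (this, via \cite[Lemma 4.8]{DehnFillings}, is where the factor $2$ actually comes from, not from the index $[E:H]$), and then feeds the now-parabolic edge group into Lemma~\ref{lem: parabolic to 2C acylindrical upgrade}. Your collapse-and-enlarge procedure is essentially an attempt to build a tree of cylinders by hand, and as written it has genuine gaps.

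First, the two operations are incompatible in the order you perform them: after collapsing the fixed subtree of every conjugate of $E$, no conjugate of $E$ fixes any edge, so ``enlarging the edge stabilizers from conjugates of $H$ to conjugates of $E$'' is not a defined operation --- $E^g$ does not stabilize the edge whose stabilizer you wish to declare equal to $E^g$, and $[E:H]<\infty$ does not by itself produce a $G$-tree with the larger edge groups. Second, and more seriously, the acylindricity check fails in the case $E_1=E_2=E_3=E^*$: there the segment stabilizer is an intersection of finite-index subgroups of $E^*$, hence again of finite index in $E^*$ and in particular infinite; it is not ``a finite torsion subgroup of $E^*$,'' and the prior collapse does not exclude this configuration, since the collapse only removes edges fixed by the full group $E^*$, not edges fixed by proper finite-index subgroups of it. (A smaller issue: $E_1\cap E_2$ for distinct conjugates need not be non-parabolic, so bounding it by $C(G,\P)$ requires the extra observation that a finite subgroup of a two-ended non-parabolic group, if parabolic, is conjugated off its maximal parabolic by a loxodromic element and hence lies in two distinct maximal parabolics; the honest bound is $2C(G,\P)$.) The cleanest repair is exactly the paper's reduction: make $\hat{H}$ peripheral, note it is elliptic because $H$ is, and quote the parabolic case.
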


\begin{proof}
Suppose $G$ splits over non-parabolic two ended subgroup $H$ relative to $\P$ and $\hat{H}$ is the maximal two ended subgroup containing $\hat{H}$. Extend the peripheral structure to $\P' = \P \cup \{ \hat{H}\}$. Note that $(G, \P')$ is also a relatively hyperbolic group pair and $C(G, \P') \leq C$ by \cite[Lemma 4.8]{DehnFillings}. Therefore by Lemma~\ref{lem: parabolic to 2C acylindrical upgrade}, note that $G$ admits a $(2, C)$-acylindrical splitting over $\P'$-parabolic subgroup $H'$. If $H'$ is conjugate into a member of $\P$ then we are done. Otherwise $H'$ is cojugate to a subgroup of $H$ hence it must be finite or two-ended. This provides us with the necessary elementary splitting. 
\end{proof}
    
Finally we have the relative version of \cite[Proposition 4.10]{DehnFillings}. It does not require the co-slenderness hypothesis as Lemma~\ref{lem: two ended non parabolic to acylindrical} does not need co-slenderness hypothesis. 

\begin{proposition}
\label{prop: twoended to 2C acylindrical upgrade in fillings}
Suppose $(G, \P)$ be relatively hyperbolic and let $C = 2C(G,\P)$. For all sufficiently long fillings $(G, \P) \to (\bar{G}, \bar{\P})$ if $\bar{G}$ admits a nontrivial splitting over a two-ended non-parabolic subgroup relative to $\P$, then $\bar{G}$ admits a nontrivial $(2, C)$--acylindrical splitting over an elementary subgroup relative to $\P$.
\end{proposition}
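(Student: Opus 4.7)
The plan is to reduce the filling statement to the ambient version already proved in Lemma~\ref{lem: two ended non parabolic to acylindrical}, using that all the relevant constants persist under sufficiently long Dehn fillings. The ingredients are already assembled in the preceding subsection, so the argument should be essentially a bookkeeping exercise.

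First I would invoke the remark after the definition of Dehn filling: for a sufficiently long filling $(G,\P)\to(\bar G,\bar\P)$, the pair $(\bar G,\bar\P)$ is again relatively hyperbolic (this is \cite[Theorem 2.17]{DehnFillings} cited in the remark). Next, by Lemma~\ref{lem: C Persists}, for sufficiently long fillings $C(\bar G,\bar\P)\le C(G,\P)$, so $2C(\bar G,\bar\P)\le 2C(G,\P)=C$. Hence any $(2,2C(\bar G,\bar\P))$--acylindrical action is, a fortiori, $(2,C)$--acylindrical (raising the cardinality bound preserves acylindricity).

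Now assume that $\bar G$ admits a nontrivial splitting over a two-ended non-parabolic subgroup relative to $\bar\P$. Apply Lemma~\ref{lem: two ended non parabolic to acylindrical} to the relatively hyperbolic pair $(\bar G,\bar\P)$ with its own constant $2C(\bar G,\bar\P)$: this yields a nontrivial $(2,2C(\bar G,\bar\P))$--acylindrical splitting of $\bar G$ over an elementary subgroup of $\bar G$ relative to $\bar\P$. By the previous paragraph this splitting is automatically $(2,C)$--acylindrical, which is the required conclusion.

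The only point requiring any care is making sure the lemmas being invoked really do apply to the filling. This is why we need the explicit observation, already established in the earlier lemmas of this section, that (i) $(\bar G,\bar\P)$ is again relatively hyperbolic, (ii) $\bar\P$ remains $C$--almost malnormal after sufficiently long fillings (Proposition~\ref{prop: fillingsMalnormal}), which is used inside Lemma~\ref{lem: two ended non parabolic to acylindrical} via Lemma~\ref{lem: parabolic to 2C acylindrical upgrade}, and (iii) the constant $C(\bar G,\bar\P)$ is uniformly bounded. The fact that the co-slenderness hypothesis of \cite[Proposition 4.10]{DehnFillings} can be dropped is not an extra argument here — it is inherited from the fact that Lemma~\ref{lem: two ended non parabolic to acylindrical} (the relative version of \cite[Lemma 4.9]{DehnFillings}) already avoids slenderness, as noted in its proof. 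So no further work is needed and the proposition follows.
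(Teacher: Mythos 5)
Your proposal is correct and follows essentially the same route as the paper: invoke Lemma~\ref{lem: C Persists} to get $C(\bar G,\bar\P)\le C(G,\P)$ for sufficiently long fillings, then apply Lemma~\ref{lem: two ended non parabolic to acylindrical} to the (still relatively hyperbolic) filled pair $(\bar G,\bar\P)$. Your extra remarks about weakening the acylindricity constant and about malnormality persisting are just the bookkeeping the paper leaves implicit.
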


\begin{proof}
$C(\bar{G}, \bar{\P}) \leq C(G,\P)$ for sufficiently long fillings by Lemma~\ref{lem: C Persists}. Then by Lemma~\ref{lem: two ended non parabolic to acylindrical}, we are done.
\end{proof}

\section{Action on $\mathbb{R}$--tree}

In this section we construct an $\mathbb{R}$--tree starting with a sequence of fillings of a relatively hyperbolic group pair $(G, \P)$. We will use this tree in subsequent sections along with a structure theorem to prove the main theorems. We first prove a lemma that upgrades a sequence of elementary splittings to $(2, C)$-acylindrical elementary splittings. 

\begin{lemma}
\label{lem: elementary to 2C acylindrical upgrade in fillings}
Suppose that $(G, \P)$ is a relatively hyperbolic group pair such that there is a stably faithful sequence of $\mathcal{M}$--finite fillings $\phi_i: (G, \P) \to (\bar{G}_i , \bar{\P}_i)$, so that each $\bar{G}_i$ admits a nontrivial elementary splitting relative to $\bar{\P}_i$. 

Let $C= 2C(G,\P)$. Then there is a stably faithful sequence of $\mathcal{M}$--finite filings $\eta_i: (G, \P) \to (\bar{G}_i , \bar{\P}_i)$, so that each $\bar{G}_i$ admits a nontrivial $(2, C)$-acylindrical elementary splitting relative to $\bar{\P}_i$.
\end{lemma}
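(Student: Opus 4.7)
The plan is to partition the given sequence of fillings according to the type of elementary subgroup over which $\bar{G}_i$ splits, and then apply the three upgrade results from Section~3 (Proposition~\ref{prop: parabolic to 2C acylindrical upgrade in fillings}, Lemma~\ref{cor: relative finite splitting}, and Proposition~\ref{prop: twoended to 2C acylindrical upgrade in fillings}), each of which holds for all sufficiently long fillings.

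First I would recall that an elementary subgroup is by definition finite, two-ended, or parabolic. For each $i$, fix a nontrivial elementary splitting of $\bar{G}_i$ relative to $\bar{\P}_i$ and let $E_i$ denote its edge group type, classified as (a) finite and non-parabolic, (b) two-ended and non-parabolic, or (c) parabolic (including finite parabolic). Partition the index set $\mathbb{N}$ into three subsets $I_a, I_b, I_c$ according to this trichotomy. At least one of these is infinite; after replacing $(\phi_i)$ by the subsequence indexed by that infinite $I_\ast$, the resulting sequence is still stably faithful (since any subsequence of a stably faithful sequence is stably faithful) and still consists of $\mathcal{M}$--finite fillings.

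Next, I would observe that stable faithfulness forces the fillings to be ``sufficiently long'' for any fixed statement of the form ``there exists a finite $\mathcal{B}\subset G\setminus\{1\}$ such that $\dots$''. More precisely, given any such finite set $\mathcal{B}$, stable faithfulness of $(\phi_i)$ means that for all sufficiently large $i$ the map $\phi_i$ is injective on $\mathcal{B}\cup\{1\}$, hence the associated filling kernels miss $\mathcal{B}$. Therefore, after discarding finitely many terms, I may assume the sequence is as long as needed for any of the three upgrade statements to apply simultaneously.

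Depending on which subset was infinite, I would then apply the appropriate upgrade: in case (c) apply Proposition~\ref{prop: parabolic to 2C acylindrical upgrade in fillings} to obtain for each $i$ a $(2,C)$--acylindrical splitting of $\bar{G}_i$ over a parabolic subgroup relative to $\bar{\P}_i$; in case (a) apply Lemma~\ref{cor: relative finite splitting} to obtain a $(2,C)$--acylindrical splitting over a finite or parabolic (hence elementary) subgroup; in case (b) apply Proposition~\ref{prop: twoended to 2C acylindrical upgrade in fillings} to obtain a $(2,C)$--acylindrical splitting over an elementary subgroup. In each case the edge group is elementary and the acylindricity constants are $(2,C)$ with $C=2C(G,\P)$, as required. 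Defining $\eta_i$ to be the resulting (re-indexed) subsequence of fillings finishes the argument.

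The only genuine thing to check is the first step's compatibility with stable faithfulness, and the ``sufficiently long'' reduction; both are essentially bookkeeping. There is no serious obstacle here, since the three upgrade results of Section~3 have already absorbed all the real work.
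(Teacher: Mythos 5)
Your proposal is correct and follows essentially the same route as the paper, which simply invokes Lemma~\ref{cor: relative finite splitting}, Proposition~\ref{prop: parabolic to 2C acylindrical upgrade in fillings} and Proposition~\ref{prop: twoended to 2C acylindrical upgrade in fillings} for all sufficiently large $i$. Your added bookkeeping (the trichotomy on edge-group type and the observation that stable faithfulness eventually guarantees any fixed ``sufficiently long'' condition) is exactly the detail the paper leaves implicit.
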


\begin{proof}
By Corollary \ref{cor: relative finite splitting}, Proposition \ref{prop: parabolic to 2C acylindrical upgrade in fillings} and Proposition \ref{prop: twoended to 2C acylindrical upgrade in fillings}, for sufficiently large $i$, the conclusion immediately follows.
\end{proof}

Suppose we have a relatively hyperbolic group pair $(G, \P)$ and a stably faithful sequence of $\mathcal{M}$--finite filings $\eta_i: (G, \P) \to (\bar{G}_i , \bar{\P}_i)$, so that each $\bar{G}_i$ admits a nontrivial $(2, C)$-acylindrical elementary splitting relative to $\bar{\P}_i$. Then each $\bar{G}_i$ acts $(2, C)$-acylindrically on the Bass Serre tree $T_i$ such that each member of $\bar{\P}_i$ is conjugated into the vertex groups of $T_i$. The argument in \cite[Section 5]{DehnFillings} goes through and we construct a \textit{limiting tree} $T_{\infty}$ on which $G$ acts relative to $\P$. 

\begin{lemma}\cite[Section 5]{DehnFillings}
\label{lem: RtreeActionByG}
The action of $G$ on $T_{\infty}$ has no global fixed point and relative to $\P$ with elementary arc stabilizers.
\end{lemma}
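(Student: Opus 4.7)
The plan is to follow the Bestvina--Paulin--Groves--Manning strategy for limiting $\mathbb{R}$--tree actions, adapted to the relative setting. Recall that $T_\infty$ is obtained as an asymptotic cone / equivariant Gromov--Hausdorff limit of the rescaled Bass--Serre trees $T_i$, where the scaling factor is $1/\lambda_i$ with $\lambda_i = \min_{g \in S} \max_{v \in T_i} d_{T_i}(v, gv)$ for a fixed finite generating set $S$ of $G$. Each $\bar{G}_i$ acts on $T_i$ via its elementary splitting, and by pulling back through $\eta_i$ we obtain an action of $G$ on each $T_i$. The stably faithful hypothesis guarantees that for any fixed finite subset of $G$ the maps $\eta_i$ are eventually injective on that subset, which is what allows the limit action to be defined.

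For the first two claims, non-triviality of the $G$--action on $T_\infty$ follows from the choice of rescaling: the minimax normalization $\lambda_i$ ensures that some generator in $S$ has translation length at least $1$ in the rescaled tree, so no global fixed point can appear in the limit. That the action is relative to $\P$ comes from the assumption that each $\bar{G}_i$'s splitting is relative to $\bar{\P}_i$: for each $P \in \P$, the image $\eta_i(P) \subseteq \bar{P}_i \in \bar{\P}_i$ fixes a vertex $v_i \in T_i$, and a compactness / basepoint-following argument in the limit produces a point of $T_\infty$ fixed by $P$.

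The main technical step, and the most delicate one, is showing that arc stabilizers in $T_\infty$ are elementary. Suppose $g \in G$ stabilizes a nondegenerate arc $[x, y] \subset T_\infty$ of length $\ell > 0$. Choose approximating points $x_i, y_i \in T_i$ with $d_{T_i}(x_i, y_i) / \lambda_i \to \ell$, so $d_{T_i}(x_i, y_i) \geq 3$ for large $i$. By the Bestvina--Paulin convergence, $\eta_i(g)$ moves $x_i$ and $y_i$ by distances that are $o(\lambda_i)$, hence much less than $d_{T_i}(x_i, y_i)$ for large $i$, so $\eta_i(g)$ stabilizes an arc of length at least $3$ in the middle of $[x_i, y_i]$. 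By $(2, C)$--acylindricity of the $\bar{G}_i$ action on $T_i$, the stabilizer of any segment of length $\geq 3$ either has cardinality $\leq C$ or is contained in a vertex stabilizer of the splitting. Using that the edge groups in the $\bar{G}_i$--splittings are elementary relative to $\bar{\P}_i$, one pushes this conclusion back through $\eta_i$ and takes a limit: the pointwise stabilizer of $[x, y]$ is contained in an intersection of approximating elementary subgroups. A standard Cantor diagonal / convergence of elementary subgroups argument (using Lemma~\ref{lem: C Persists}, the almost-malnormality from Proposition~\ref{prop: fillingsMalnormal}, and the stably faithful hypothesis) then shows that this intersection lifts to an elementary subgroup of $G$.

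I expect the last step to be the main obstacle: transferring the $(2, C)$--acylindricity of the Bass--Serre actions of the quotients $\bar{G}_i$ to a statement about the pulled-back $G$--actions requires care with how long an arc of $T_i$ a given $g \in G$ fixes, since $\eta_i(g) = 1$ is possible for elements in the kernel and one must argue that in the relevant regime $\eta_i(g)$ is nontrivial and its fixed-arc length controls the limit. This is precisely where the stably faithful assumption and the normalization of the rescaling are used together, mirroring the arguments of \cite[Section 5]{DehnFillings} but without invoking slenderness of peripherals, since elementariness rather than slenderness is what controls the arc stabilizers in the relative splittings here.
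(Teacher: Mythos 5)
Your overall strategy --- rescaled Bass--Serre trees, a Bestvina--Paulin limit, non-triviality from the minimax normalization, and ellipticity of each $P \in \P$ in $T_\infty$ coming from ellipticity of $\eta_i(P)$ in $T_i$ --- is the same one the paper imports wholesale from \cite[Section 5]{DehnFillings} (its proof simply cites Lemmas 5.4 and 5.5 there for the first two claims), and those parts of your sketch are fine. The gap is in the arc-stabilizer step. First, you misstate $(2,C)$--acylindricity: as defined in this paper it says the stabilizer of \emph{every} segment of length at least $3$ has cardinality at most $C$; there is no alternative ``or is contained in a vertex stabilizer,'' and the elementariness of the edge groups of the splittings of $\bar{G}_i$ plays no role here. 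Second, and more seriously, your concluding step does not work as written: the ``approximating elementary subgroups'' live in different quotients $\bar{G}_i$, so to intersect them you must pull them back to $G$, and the preimage under $\eta_i$ of an elementary subgroup of $\bar{G}_i$ contains the filling kernel and is essentially never elementary. No diagonal or convergence argument recovers elementariness from such preimages.

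The argument the paper actually relies on (via \cite[Corollary 5.8]{DehnFillings} followed by \cite[Corollary 5.11]{DehnFillings}) is a two-step one: acylindricity gives that for any finitely many elements $g_1,\dots,g_n$ fixing a nondegenerate arc of $T_\infty$, the image $\eta_i(\langle g_1,\dots,g_n\rangle)$ fixes a segment of length at least $3$ in $T_i$ for all large $i$ and hence has order at most $C$. If the arc stabilizer contained a nonabelian free group $F=\langle g,h\rangle$, then $\ker(\eta_i|_F)$ would have index at most $C$ in $F$; since $F$ has only finitely many subgroups of index at most $C$ and each is a nontrivial free group, one fixes a nontrivial element in each and contradicts stable faithfulness. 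Hence arc stabilizers are \emph{small}, and smallness is then upgraded to elementariness using that every small subgroup of a relatively hyperbolic group is elementary (finite, two-ended, or parabolic). This reordering matters for the point of the lemma: it is precisely the ``small $\Rightarrow$ elementary'' step, which needs no finite generation or slenderness of small subgroups, that lets the paper drop the hypotheses of \cite{DehnFillings}.
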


Notice that none of the results used in the following proof uses the fact that small subgroups of $G$ are finitely generated. 

\begin{proof}
$G$ acts on $T_{\infty}$ without global fixed points by \cite[Lemma 5.4]{DehnFillings}. Using the argument in the proof of \cite[Lemma 5.5]{DehnFillings}, we conclude that the action is relative to $\P$. By \cite[Corollary 5.8]{DehnFillings}, the arc stabilizers are small and therefore by \cite[Corollary 5.11]{DehnFillings}, the arc stabilizers are elementary. 
\end{proof}

The fundamental structure theorem for stable actions of finitely presented groups on $\R$--trees is the splitting theorem of Bestvina--Feighn \cite{BestvinaFeighn_RipsMachine}.
A relatively hyperbolic analogue of the Bestvina--Feighn structure theorem due to Guirardel--Levitt \cite{GuirardelLevitt_Splittings} gives the following theorem.

\begin{theorem}\cite[Corollary 9.10]{Guirardel2012SplittingsAA}
\label{cor: relativerips thm}
Let $(G, \P)$ be a relatively hyperbolic group pair. If $G$ acts non-trivially on an $\mathbb{R}$-tree $T$ relative to $\P$ with elementary
arc stabilizers, then $G$ splits over an elementary subgroup relative to $\P$ \end{theorem}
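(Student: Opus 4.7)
The plan is to invoke the relative version of the Rips--Bestvina--Feighn structure theorem for actions on $\mathbb{R}$--trees, developed by Guirardel and Levitt. First I would verify that the input hypotheses are in place: since $(G,\P)$ is relatively hyperbolic, $G$ is finitely presented relative to $\P$; the action on $T$ is nontrivial and relative to $\P$, so each peripheral subgroup fixes a point; and arc stabilizers are elementary. Elementary subgroups of a relatively hyperbolic group are either finite, two-ended, or parabolic (the parabolic ones being already elliptic in $T$ because the action is relative to $\P$), which is exactly the smallness condition needed to run the relative Rips machine.

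Next I would apply the relative Rips decomposition to $T$, obtaining a $G$--equivariant graph of actions whose vertex pieces fall into the four standard types: simplicial, axial (Lin), interval-exchange (surface/orbifold), and Levitt (thin). Because the action is relative to $\P$, every member of $\P$ remains elliptic in each piece, and this ellipticity will persist through every subsequent reduction. From this structural decomposition one extracts an elementary splitting of $G$ relative to $\P$ by cases. If the decomposition has a simplicial edge, its stabilizer is an arc stabilizer of $T$, hence elementary, and one reads off an HNN or amalgam splitting directly. An axial component yields a splitting over a two-ended subgroup via its axis stabilizer, and a surface or orbifold component yields a two-ended splitting by cutting along a generic essential simple closed curve in the underlying $2$--orbifold.

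The principal obstacle is the Levitt (thin) case, where indecomposability of the component precludes any obvious cut. Here the argument of Guirardel--Levitt exploits the band-complex structure underlying the $\mathbb{R}$--tree: finite presentability of $G$ relative to $\P$, combined with smallness of arc stabilizers, forces the existence of a non-trivial dual splitting over an arc stabilizer via the Rips normalization of thin components. In every case the resulting edge group is elementary, and ellipticity of the members of $\P$ in $T$ transfers to ellipticity in the Bass--Serre tree of the resulting splitting, so the splitting produced is automatically relative to $\P$, providing the desired elementary splitting of $G$ relative to $\P$.
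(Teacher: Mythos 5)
Note first that the paper does not prove this statement at all: it is quoted verbatim as Corollary 9.10 of Guirardel--Levitt and used as a black box, so there is no internal argument to compare yours against. What you have written is essentially an outline of how Guirardel and Levitt themselves prove it --- decompose the $\mathbb{R}$--tree via the relative Rips machine into simplicial, axial, surface and thin pieces and extract a splitting from each type --- so you are reconstructing the cited proof rather than offering a genuinely different route.

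If your sketch is meant to stand on its own, however, it has real gaps. The Rips/Bestvina--Feighn machinery does not run on ``smallness'' of arc stabilizers; it needs the action to be \emph{stable}, and deducing stability from the hypothesis that arc stabilizers are elementary is a genuine step (Guirardel--Levitt obtain it from an ascending chain condition on elementary subgroups of a relatively hyperbolic group, which itself requires separate arguments for increasing unions of finite subgroups and of two-ended subgroups). You also need $G$ to be finitely presented \emph{relative to} $\P$ in the precise sense their machine requires, and the persistence of ellipticity of the members of $\P$ through the decomposition and the subsequent resolutions is exactly where the relative bookkeeping lives --- you assert it rather than prove it. Finally, in the axial and surface cases the edge group you extract is a priori only a small subgroup (the stabilizer of a line acted on with dense orbits, or the preimage of a simple closed curve subgroup), and you must still argue that such a subgroup of a relatively hyperbolic group is elementary. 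None of these points is fatal --- they are all handled in the cited paper --- but as written your proposal is a table of contents for that proof, not a replacement for it.
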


\section{Main Results}
The first part of the proof of the following theorem is essentially available in \cite{DehnFillings}. We feed the Dehn fillings into Theorem \ref{cor: relativerips thm} to obtain the conclusion. We state the statement and sketch the structure of the argument for the sake of completion.
\begin{theorem}
\label{thm: elementary splittings is preserved}
Let $(G, \P)$ be a relatively hyperbolic group pair that admits no elementary splitting relative to $\P$. Then all sufficiently long $\mathcal{M}$--finite fillings $(G, \P) \twoheadrightarrow (\bar{G}, \bar{\P})$ have the property that $\bar{G}$ admits no elementary splitting relative to $\bar{\P}$.
\end{theorem}
% \begin{enumerate}
%     \item Is the action of $G$ on $T_{\infty}$ relative to $\P$?
%     \item Does the proof of \cite[Corollary 5.8]{DehnFillings} depend on the hypothesis $\colon$ `all small subgroups are finitely generated'?
% \end{enumerate}

\begin{proof}
Suppose $(G, \P)$ is a counterexample to this theorem. Then there exists a non-trivial action of $G$ on an $\mathbb{R}$--tree $T_{\infty}$ relative to $\P$ with elementary arc stabilizers by Lemma~\ref{lem: elementary to 2C acylindrical upgrade in fillings} and Lemma~\ref{lem: RtreeActionByG}. Then by Theorem~\ref{cor: relativerips thm} we conclude that $G$ admits an elementary splitting relative to $\P$ which is a contradiction.
\end{proof}

\begin{theorem}
\label{thm: parabolic splittings is preserved}

Let $(G, \P)$ be relatively hyperbolic, relatively one-ended and admits no proper peripheral splittings. Then all sufficiently long $\mathcal{M}$--finite fillings $(G, \P) \twoheadrightarrow (\bar{G}, \bar{\P})$ have the property that $\bar{G}$ is one-ended relative to $\bar{\P}^{\infty}$ and admits no splitting over parabolic subgroups relative to $\bar{\P}^{\infty}$.

\end{theorem}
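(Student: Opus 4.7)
The plan is to mimic the contradiction argument of Theorem~\ref{thm: elementary splittings is preserved}, feeding finite-or-parabolic splittings into the $\mathbb{R}$-tree machinery and then invoking a peripheral splitting theorem of Bowditch to finish. Assume the conclusion fails. Then there is a stably faithful sequence of sufficiently long $\mathcal{M}$-finite fillings $\phi_i \colon (G,\P)\twoheadrightarrow(\bar G_i,\bar\P_i)$ for which each $\bar G_i$ either fails to be one-ended relative to $\bar\P_i^{\infty}$ or admits a nontrivial splitting over a parabolic subgroup relative to $\bar\P_i^{\infty}$. Since $(\bar G_i,\bar\P_i^{\infty})$ is itself relatively hyperbolic (by the remark following the definition of fillings), in either case $\bar G_i$ admits a nontrivial splitting over a finite or parabolic subgroup relative to $\bar\P_i^{\infty}$.

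The next step is to upgrade acylindricity using Section~3: Corollary~\ref{cor: relative finite splitting} and Proposition~\ref{prop: parabolic to 2C acylindrical upgrade in fillings} apply relative to $\bar\P_i^{\infty}$ and produce a nontrivial $(2,C)$-acylindrical splitting, with $C = 2C(G,\P)$, whose edge group is still finite or parabolic. I would then feed this stably faithful sequence into the limiting construction of Section~4 to obtain a nontrivial action of $G$ on an $\mathbb{R}$-tree $T_\infty$, relative to $\P$, with elementary arc stabilizers (Lemma~\ref{lem: RtreeActionByG}). The essential bookkeeping step is to check that the arc stabilizers of $T_\infty$ remain \emph{finite or parabolic} in $G$, rather than landing in the full elementary class; this uses that the edge stabilizers chosen along the sequence are finite or parabolic, together with $(2,C)$-acylindricity and the almost malnormality of $\bar\P_i$ supplied by Proposition~\ref{prop: fillingsMalnormal}. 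Applying the relative Rips machine, Theorem~\ref{cor: relativerips thm}, to this action then yields a nontrivial splitting of $G$ over a finite or parabolic subgroup relative to $\P$.

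The final step is to rule out this splitting. If the edge group is finite, then $G$ splits over a finite subgroup relative to $\P$, contradicting the hypothesis that $(G,\P)$ is relatively one-ended. If the edge group is parabolic, I would invoke Bowditch's peripheral splitting theorem: a relatively one-ended relatively hyperbolic pair that splits over a parabolic subgroup relative to its peripheral structure admits a proper peripheral splitting, contradicting the assumption that $(G,\P)$ has no proper peripheral splitting. The main obstacle is the arc-stabilizer control in the limit: a priori the Rips machine only returns an elementary edge group, and one must prevent two-ended non-parabolic edge groups from arising as limits of acylindrical approximations whose edge groups are all finite or parabolic. This is the one place where the argument requires more than a direct transplant of the proof of Theorem~\ref{thm: elementary splittings is preserved}.
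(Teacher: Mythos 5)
Your route is genuinely different from the paper's, and it has a real gap at exactly the point you flagged. The paper does not pass through the limiting $\R$--tree at all for this theorem: it takes the offending splitting of $\bar{G}$ over a finite subgroup $F$ relative to $\bar{\P}^{\infty}$ and pulls it back through the quotient map $G\to\bar{G}$ to an action of $G$ on the same Bass--Serre tree. When $F$ is parabolic it first promotes the splitting to a peripheral splitting of $(\bar{G},\bar{\P})$ in which all members of $\bar{\P}$ are elliptic, and the induced $G$--action then produces a proper peripheral splitting of $(G,\P)$; when $F$ is non-parabolic it invokes the JSJ decomposition over the (uniformly bounded) finite non-parabolic subgroups together with the fact that such subgroups of $\bar{G}$ lift to $G$ for long fillings, contradicting relative one-endedness. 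No sequence of fillings, no acylindricity upgrade, and no Rips machine appear; the argument is a single-filling pullback plus JSJ theory.

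The gap in your plan is that controlling the arc stabilizers of $T_{\infty}$ is not enough. Theorem~\ref{cor: relativerips thm} returns only a splitting over an \emph{elementary} subgroup, and a two-ended non-parabolic edge group is perfectly consistent with your hypotheses: $(G,\P)$ is assumed relatively one-ended with no proper peripheral splitting, not free of all elementary splittings, so such an output gives no contradiction. Moreover, the finer Rips/Guirardel--Levitt structure theory does not let you deduce ``edge group finite or parabolic'' from ``arc stabilizers finite or parabolic'': in a surface-type minimal component the arc stabilizers can be trivial while the resulting splitting is over a two-ended boundary subgroup of the orbifold group --- precisely the case you cannot afford. Hence the sentence ``applying the relative Rips machine \dots\ yields a nontrivial splitting of $G$ over a finite or parabolic subgroup relative to $\P$'' does not follow from anything you have established, and repairing it would require ruling out surface and axial components, which is substantially more than bookkeeping. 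The paper's pullback argument sidesteps this issue entirely.
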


\begin{proof}

Let $G$ be a counterexample to this theorem. Then $\bar{G}$ is not one ended relative to $\bar{\P}^{\infty}$. Hence $\bar{G}$ admits a splitting over a finite subgroup $F$ relative to $\bar{\P}^{\infty}$. We have two cases $\colon$ either $F$ is parabolic or $F$ is nonparabolic subgroup of $\bar{G}$.

Suppose $F$ is parabolic. As $\bar{G}$ admits a splitting over a parabolic subgroup relative to $\bar{\P}^{\infty}$, therefore $(\bar{G},\bar{\P}^{\infty})$ admits a peripheral splitting in which all members of $\bar{\P}$ are elliptic. Consider the induced action of $G$ on the Bass--Serre tree $T$ of this peripheral splitting $(\bar{G},\bar{\P})$. Clearly all the members of $\P$ act elliptically and all the edge stabilizers are parabolic. This implies a proper peripheral splitting of $(G, \P)$ contradicting the given hypothesis.

Now suppose $F$ is non-parabolic. Let $\mathcal{A}$ be the collection of non-parabolic finite groups. The order of the members in $\mathcal{A}$ is bounded by \cite[Lemma 4.3]{DehnFillings}. Hence by \cite[Section 3.3]{GuirardelLevitt17_jsjDecom}, there is a JSJ splitting $T$ over $\mathcal{A}$ relative to $\bar{\P}^{\infty}$. Again consider the induced action of $G$ on $T$. Since $F$ is non-parabolic, hence it is isomorphic to a finite subgroup $F'$ of $G$ by \cite[Theorem 4.1]{DehnFillings}. Clearly the action of $G$ on $T$ is relative to $\P$ with a finite group $F'$ as an edge stabilizer of $T$ contradicting the relative one endedness of $G$.
\end{proof}

\begin{theorem}\cite[Theorem 1.1]{dasgupta}
\label{thm: cutpoint implies peripheral splitting}
Suppose $(G, \P)$ is relatively hyperbolic. Suppose further that the Bowditch boundary $\partial (G, \P)$ is connected. Then $\partial (G, \P)$ has a cut point if and only if $(G, \P)$ has a non trivial peripheral splitting.

\end{theorem}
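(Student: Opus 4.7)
My strategy is to prove the two implications separately, with the reverse direction being substantially deeper.

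For the easy direction (nontrivial peripheral splitting implies cut point), assume $(G,\P)$ admits a nontrivial peripheral splitting with Bass--Serre tree $T$. Pick an edge $e$ of $T$ whose stabilizer is parabolic and contained in some $P\in\P$ fixing a vertex $v$ of $T$, and let $\xi\in\partial(G,\P)$ be the parabolic fixed point associated to $P$. Removing $e$ separates $T$ into two subtrees $T_1$ and $T_2$, and a standard convergence--group argument (as in Bowditch's original treatment) shows that limits of orbits of vertices in $T_1$ versus in $T_2$ land in disjoint closed subsets whose union is $\partial(G,\P)\setminus\{\xi\}$; hence $\xi$ is a cut point.

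For the reverse direction, assume $\partial(G,\P)$ is connected and contains a cut point. The plan is to construct an $\R$--tree $T_{\mathrm{cut}}$ on which $G$ acts relative to $\P$ with parabolic arc stabilizers, and then feed this into Theorem~\ref{cor: relativerips thm}. Following the Bowditch/Papasoglu--Swenson paradigm, the set of cut points of $\partial(G,\P)$ carries a natural pretree structure, with $c$ lying between $c_1$ and $c_2$ iff $c$ separates them. This pretree admits a canonical $\R$--tree completion $T_{\mathrm{cut}}$ on which $G$ acts by homeomorphisms. Because $\partial(G,\P)$ is connected with a cut point, $T_{\mathrm{cut}}$ is nondegenerate, and a straightforward averaging argument rules out a global fixed point; each peripheral subgroup fixes a vertex because its parabolic fixed point has a canonical location in the pretree.

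The main obstacle, and the novelty over Bowditch's original cut--point theorem, is to show that every arc stabilizer of $T_{\mathrm{cut}}$ is parabolic without any finiteness or slenderness hypothesis on members of $\P$. The route is first to prove that every cut point of $\partial(G,\P)$ is itself a parabolic fixed point: given a cut point $\xi$, its $G$--stabilizer acts as a convergence group on the components of $\partial(G,\P)\setminus\{\xi\}$, and the almost--malnormality of $\P$ (Lemma 3.3 of \cite{DehnFillings}) together with north/south dynamics on the cusped space force $\mathrm{Stab}(\xi)$ to lie inside a single maximal parabolic. Once cut points are known to be parabolic, each arc in $T_{\mathrm{cut}}$ is stabilized by the intersection of two such parabolics, which is again parabolic (and indeed bounded in size unless the two parabolics coincide, by almost--malnormality).

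With the $\R$--tree thus produced, Theorem~\ref{cor: relativerips thm} yields a splitting of $G$ over an elementary subgroup relative to $\P$; because the arc stabilizers were in fact parabolic and every member of $\P$ is elliptic, the splitting may be taken over parabolic subgroups with $\P$--members fixing vertices. A standard refinement, folding and bipartite collapse of the resulting Bass--Serre tree (exactly the procedure used to extract the peripheral JSJ from a parabolic splitting) then produces the required bipartite graph of groups in which one color class consists precisely of the conjugates of members of $\P$, i.e.\ a nontrivial peripheral splitting.
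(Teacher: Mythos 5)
This statement is not proved in the paper at all: it is imported verbatim from \cite{dasgupta}, so there is no internal proof to compare against, and your proposal has to be judged against the known Bowditch-style argument that the cited reference generalizes. As an outline your proposal identifies the right landmarks (cut points are parabolic fixed points; a tree encodes the cut-point structure; a splitting relative to $\P$ is refined to a peripheral one, as in \cite{Bowditch12_relHyp}), and the forward direction (peripheral splitting $\Rightarrow$ cut point) is essentially correct modulo replacing ``limits of orbits of vertices'' by limit sets of the two half-tree stabilizers. But the reverse direction has genuine gaps, and they sit exactly where the content of \cite{dasgupta} lies.

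First, your appeal to Theorem~\ref{cor: relativerips thm} cannot deliver what you need. That theorem requires a nontrivial \emph{isometric} action on an $\R$--tree, whereas the cut-point pretree completion only comes with an action by homeomorphisms; promoting a non-nesting homeomorphic action to an isometric one with controlled arc stabilizers (Levitt's theorem, or Bowditch's alternative construction) is a substantive step you do not supply. Moreover, even granting the isometric action, the output of the relative Rips machine is a splitting over an \emph{elementary} subgroup --- possibly finite or two-ended non-parabolic --- and your claim that the splitting ``may be taken over parabolic subgroups because the arc stabilizers were parabolic'' is unjustified: the edge groups produced by the Rips decomposition are not in general arc stabilizers of the input tree (axial and Seifert-type components produce other edge groups), so you have not produced a splitting over a parabolic subgroup, let alone the bipartite peripheral splitting the theorem asserts. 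Second, the key lemma that every global cut point is a parabolic fixed point is asserted via ``north--south dynamics plus almost-malnormality,'' but the actual difficulty is showing that a conical limit point cannot be a global cut point; your sketch does not engage with this, and it is precisely here (together with the structure of the cut-point pretree) that the absence of any tameness or finite-generation hypothesis on the members of $\P$ --- the whole point of \cite{dasgupta} --- must be confronted. As written, the proposal is a plausible roadmap but not a proof.
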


% \begin{proof}
% Suppose $(G, \P)$ has a nontrivial peripheral splitting. Then by \cite[Theorem 1.2]{bowditchPeripheralSplitting} we conclude $\partial (G, \P)$ has a cut point.

% Conversely suppose $\partial (G, \P)$ has a cut point. By \cite{dasgupta}, that cut point must be a parabolic fixed point. Hence by \cite[Theorem 0.2]{Bowditch.boundary.of.geom} we conclude that $(G, \P)$ admits a proper peripheral splitting.
% \end{proof}

\begin{theorem}
Suppose $(G, \P)$ is relatively hyperbolic. Suppose further that the Bowditch boundary $\partial (G, \P)$ is connected with no cut point.

Then for all sufficiently long $\mathcal{M}$--finite fillings $(G, \P) \twoheadrightarrow (\bar{G}, \bar{\P})$, the resulting boundary of $\partial (\bar{G}, \bar{\P}^{\infty} )$ is connected and has no cut points.
\end{theorem}

\begin{proof}
Since there is not cut point in the boundary, hence by Theorem \ref{thm: cutpoint implies peripheral splitting}, we know that $(G, \P)$ has no proper peripheral splitting.

Then by Theorem \ref{thm: parabolic splittings is preserved}, for sufficiently long $\mathcal{M}$--finite fillings $(G, \P) \twoheadrightarrow (\bar{G}, \bar{\P})$, we know that $(\bar{G}, \bar{\P}^{\infty})$ is relatively one-ended and has no proper peripheral splittings. Then by \cite[Theorem 10.1]{Bowditch12_relHyp}, we conclude that $\partial (\bar{G}, \bar{\P}^{\infty} )$ is connected and by \cite[Theorem 1.1]{dasgupta}, it has no cut points.
\end{proof}

\bibliographystyle{alpha}
\bibliography{main}

\end{document}